\newcommand{\setbuilder}[2]{\left\{#1\ \colon #2\right\}}
\newcounter{iii}
\newcommand*\phantomrel[1]{\mathrel{\phantom{#1}}}
\newcommand{\R}{{\mathbb R}}
\newcommand{\eps}{{\varepsilon}}
\theoremstyle{plain}
\newtheorem{thm}{Theorem}
\newtheorem{lem}{Lemma}
\newtheorem{cla}{Claim}
\newtheorem{prop}{Proposition}
\newtheorem{pro}{Problem}
\newtheorem{obs}{Observation}
\theoremstyle{definition}
\newtheorem{defi}{Definition}
\newtheorem{remark}{Remark}
\date{}
\title{Almost sharp bounds on the number of discrete chains in the plane}
\author{N\'ora Frankl\thanks{London School of Economics, UK and Moscow Institute of Physics and Technology, Russia. Email: {\tt n.frankl@lse.ac.uk}. Supported by an LMS Early Career Fellowship. Research was partially supported by the National Research, Development,
and Innovation Office, NKFIH Grant K119670.} \and Andrey Kupavskii\thanks{G-SCOP, CNRS, University Grenoble-Alpes, France; Moscow Institute of Physics and Technology, Russia; Email: {\tt kupavskii@yandex.ru}}}
\begin{document}
\maketitle

\begin{abstract}
The following generalisation of the Erdős Unit Distance problem was recently suggested by Palsson, Senger and Sheffer. For a fixed sequence $\bm\delta=(\delta_1,\dots,\delta_k)$ of $k$ distances, a $(k+1)$-tuple $(p_1,\dots,p_{k+1})$ of distinct points in $\mathbb{R}^d$ is called a \emph{$k$-chain} if $\|p_j-p_{j+1}\| = \delta_j$ for every $1\leq j \leq k$. What is the maximum number $C_k^d(n)$ of $k$-chains in a set of $n$ points in $\mathbb{R}^d$?
Improving the results of Palsson, Senger and Sheffer, we essentially determine this maximum for all $k$ in the planar case.
It is only  for $k\equiv 1$ (mod $3$) that the answer depends
on the maximum number of unit distances in a set of $n$ points. We also obtain almost sharp results for even $k$ in dimension $3$, and propose further generalisations.
 \end{abstract}

\section{Introduction}\label{sec:introduction}

Determining the maximum possible number of pairs $u_d(n)$ at distance $1$ apart in a set of $n$ points in $\mathbb{R}^d$ for $d=2$ is one of the  central questions in combinatorial geometry, known as the Erdős Unit Distance problem. The  question dates back to 1946, and despite much effort, the best known upper and lower bounds are still very far apart. For some constants $C,c>0$, we have
\[n^{1+c/\log\log n}\leq u_2(n)\le Cn^{4/3},\]
where the lower bound is due to Erdős \cite{erdos} and the upper bound is due to Spencer, Szemerédi and Trotter \cite{SST}. Recently, there has been great progress in a closely related
problem of determining the minimum number of distinct distances between $n$ points on the plane due to Guth and Katz \cite{GK}, but the powerful algebraic machinery they used has not yet given any improvement for the unit distance question.

As in the planar case, the best known upper and lower bounds in the $3$-dimensional case are also far apart. 
For every $\varepsilon>0$ there are constants $c,C>0$ such that we have

\begin{equation}\label{eqzahl}
cn^{4/3}\log\log n\le u_3(n)\le Cn^{295/197+\varepsilon},
\end{equation}
where the lower bound is due to Erdős \cite{erdos3}, and the upper bound is due to Zahl \cite{Za2}. The latter is a recent improvement upon the   upper bound $O(n^{3/2})$ by Kaplan, Matou\v sek, Safernová, and Sharir \cite{KMSS}, and Zahl \cite{Za}. 

This paper can be seen as an effort to find generalisations of the Unit Distance problem that are within the  reach of our current methods. In what follows, we describe the generalisation that we work with.

Palsson, Senger and Sheffer \cite{Shef} suggested the following question.
Let $\bm\delta=(\delta_1,\dots,\delta_k)$ be a fixed sequence of $k$ positive reals. A $(k+1)$-tuple $(p_1,\dots,p_{k+1})$ of distinct points in $\mathbb{R}^d$ is called a \emph{$k$-chain} if $\|p_i-p_{i+1}\|=\delta_i$ for all $i=1,\dots,k$. For every fixed $k$ determine $C^d_k(n)$, the maximum number of $k$-chains that can be spanned by a set of $n$ points in $\mathbb{R}^d$. We do not include $\bm\delta$ in the notation, since our results, with the exception of Proposition \ref{3dbound} do not depend on $\bm\delta$ up to the order of magnitude.
The authors of \cite{Shef} give the following lower bound on $C^2_k(n)$: 
\[C^2_k(n)=\Omega\left (n^{\lfloor (k+1)/3 \rfloor+1}\right ). \]
They also provided upper bounds in terms of the maximum number of unit distances.

\begin{prop}[Palsson, Senger, and Sheffer \cite{Shef}] 
\begin{equation*} C^2_k(n) =
    \begin{cases}
      O\left (n\cdot u_2(n)^{k/3}   \right ) & \text{\rm if $k\equiv 0$ (mod $3$),}\\
      O\left (u_2(n)^{(k+2)/3}\right ) & \text{\rm if $k\equiv 1$ (mod $3$),}\\
      O\left (n^2\cdot u_2(n)^{(k-2)/3}\right ) & \text{\rm if $k\equiv 2$ (mod $3$).}
    \end{cases}       
\end{equation*}
\end{prop}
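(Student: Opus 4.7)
My plan is to count each $k$-chain $(p_1,\ldots,p_{k+1})$ by first selecting a subset $M\subseteq\{2,\ldots,k\}$ of \emph{middle} indices with no two consecutive elements, and then parameterising the chain by the points indexed by $N:=\{1,\ldots,k+1\}\setminus M$. The key geometric input is that two distinct circles in the plane meet in at most two points, so once the non-middle coordinates are fixed and $i\in M$, the constraints $\|p_{i-1}-p_i\|=\delta_{i-1}$ and $\|p_i-p_{i+1}\|=\delta_i$ leave at most two positions for $p_i$. Hence the contribution of middles to the total count is a factor of $2^{|M|}$, absorbed into the implicit constant.

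The non-middle indices split into maximal runs of consecutive integers; since $M$ is independent and contains neither $1$ nor $k+1$, each run has length $1$ or $2$. A run $\{i,i+1\}$ of length $2$ encodes the constraint $\|p_i-p_{i+1}\|=\delta_i$, and by a rescaling argument the number of ordered pairs at any fixed distance among $n$ points in the plane is $O(u_2(n))$. A singleton run $\{i\}$ is an unconstrained point, contributing at most $n$ choices. Consequently, if $M$ creates $a$ length-$2$ runs and $b$ singleton runs (with $2a+b+|M|=k+1$), the total number of $k$-chains is $O(u_2(n)^a\cdot n^b)$.

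It remains to pick $M$ residue class by residue class so as to minimise the exponent of $n$. For $k=3m+1$, taking $M=\{3,6,\ldots,3m\}$ yields $m+1$ length-$2$ runs and no singletons, giving $O(u_2(n)^{(k+2)/3})$. For $k=3m$, the same choice produces $m$ length-$2$ runs plus the singleton $\{k+1\}$, giving $O(n\cdot u_2(n)^{k/3})$. For $k=3m+2$, taking $M=\{2,5,\ldots,3m+2\}$ produces $m$ length-$2$ runs together with the singletons $\{1\}$ and $\{k+1\}$, giving $O(n^2\cdot u_2(n)^{(k-2)/3})$. In each case a routine check shows that the claimed partition satisfies the independence and no-long-run conditions.

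There is no serious obstacle here: the whole argument is a direct double counting built on the two-circle intersection bound. The only mildly delicate point is the degenerate case in which the two circles determining a middle coincide (namely $p_{i-1}=p_{i+1}$ together with $\delta_{i-1}=\delta_i$); this is a lower-order phenomenon and is absorbed either by breaking it off as a separate sum or by inflating the constant in front.
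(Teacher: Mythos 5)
Your argument is correct, and it is essentially the standard proof of this bound (the paper itself only cites it from Palsson--Senger--Sheffer without proof): delete every third point of the chain, observe that each deleted point lies on the intersection of two circles centred at its already-chosen neighbours and is thus determined up to a factor of $2$, and bound each surviving constrained pair by $O(u_2(n))$ and each free point by $n$; your residue-class bookkeeping ($a$ pairs, $b$ singletons with $2a+b+|M|=k+1$) reproduces the three stated exponents exactly. The one caveat you raise is in fact vacuous: since a $k$-chain consists of \emph{distinct} points, we always have $p_{i-1}\neq p_{i+1}$, so the two circles have distinct centres and meet in at most two points --- no degenerate coincident-circle case needs to be split off.
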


If $u_2(n) = O(n^{1+\varepsilon})$ for any $
\eps>0$, which is conjectured to hold, then the upper bounds in
the proposition above almost match the lower bound given above.
However, as we have already mentioned, determining the order of magnitude of $u_2(n)$ has proved to be a very hard problem and is very far from its resolution.
Thus, it is interesting to obtain ``unconditional'' bounds, that depend on the value of $u_2(n)$ as little as possible.  In \cite{Shef}, the following ``unconditional'' upper bounds were proved in the planar case.

\begin{thm}[Palsson, Senger, and Sheffer \cite{Shef}]\label{PSS2} $C_2^2(n)=\Theta(n^2)$, and for every $k\geq 3$ we have
\[C^2_k(n)=O\left (n^{2k/5+1+\gamma_k}\right ), \]
where $\gamma_k\leq \frac{1}{12}$, and $\gamma_k \to \frac{4}{75}$ as $k\to \infty$.
\end{thm}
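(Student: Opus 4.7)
For the base case $k=2$, the upper bound $C_2^2(n)=O(n^2)$ is elementary: fixing the endpoints $p_1,p_3$, the middle vertex $p_2$ must lie in the intersection of the circle of radius $\delta_1$ around $p_1$ and the circle of radius $\delta_2$ around $p_3$, which contains at most two points. Summing over the $O(n^2)$ ordered pairs gives the bound. A matching construction comes from two concentric circles of radii $\delta_1,\delta_2$ with $n/2$ points distributed on each, using the common centre as $p_2$ (perturbing slightly if needed to keep the points distinct).

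For $k\ge 3$, my plan is to induct on $k$ using the Spencer--Szemer\'edi--Trotter bounds $\sum_{p\in P} d_j(p) = O(n^{4/3})$ and $\sum_{p\in P} d_j(p)^2 = O(n^{7/3})$, where $d_j(p)=|\{q\in P:\|p-q\|=\delta_j\}|$; the second bound counts isosceles triples and follows from the Szemer\'edi--Trotter incidence theorem applied to points and circles of a fixed radius. I would fix a dyadic threshold $\tau$ and split $P$ into a high-degree set $H_\tau=\{p:\max_j d_j(p)>\tau\}$ of size $O(n^{4/3}/\tau)$ and its complement. A $k$-chain is then classified according to which of its interior vertices belong to $H_\tau$. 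Chains whose interior vertices are all low-degree contribute a factor of at most $\tau$ per edge, giving a total of order $n\cdot \tau^{k}$. Chains with a high-degree vertex $p_i$ are counted by fixing $p_i$, paying a factor $d_{i-1}(p_i)\cdot d_i(p_i)$ for the two incident edges (bounded on average via $\sum d_j^2=O(n^{7/3})$) and applying the inductive hypothesis to the two resulting shorter chains.

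Balancing the two contributions by choosing the optimal $\tau$ as a dyadic power of $n$ yields a recurrence for the best achievable exponent $\alpha_k$ with $C_k^2(n) = O(n^{\alpha_k})$. Solving the recurrence gives $\alpha_k = 2k/5 + 1 + \gamma_k$, where the linear term $2k/5$ appears as a geometric mean of the exponents supplied by the first- and second-moment bounds on $d_j$, and the lower-order correction $\gamma_k$ measures the slack between the unconditional bound $u_2(n)=O(n^{4/3})$ and the conjectural $n^{1+\eps}$ behaviour.

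The main obstacle I anticipate is controlling $\gamma_k$ on two fronts at once: establishing $\gamma_k\le 1/12$ uniformly in $k\ge 3$ while also showing $\gamma_k\to 4/75$ as $k\to\infty$. The uniform bound needs a case split by $k\bmod 5$ (or a similar small modulus that governs how the dyadic decomposition groups the chain) together with a direct check for the smallest $k$, whereas the limiting value requires identifying the fixed point of the associated recurrence. Fitting both into one clean statement, with an optimisation that is tight for large $k$ and not too lossy for small $k$, is the delicate part of the argument.
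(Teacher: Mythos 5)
This statement is quoted from Palsson, Senger and Sheffer \cite{Shef}; the present paper does not prove it, so your attempt can only be judged on its own merits, and as it stands it has genuine gaps. The $k=2$ part is fine (two circles of distinct radii meet in at most two points, plus the concentric-circles construction). For $k\ge 3$, however, the inductive step is unsound as described. When you fix a high-degree interior vertex $p_i$ and pay $d_{i-1}(p_i)\,d_i(p_i)$ for its two incident edges, the two remaining pieces are chains with a \emph{prescribed endpoint} (namely $p_{i-1}$, respectively $p_{i+1}$), and your inductive hypothesis only bounds the \emph{total} number of shorter chains, not the number of chains rooted at a given point; there is no legitimate way to reassemble these counts. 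This is exactly the obstruction the present paper flags in the remark after Theorem~\ref{firstbound}: an induction for chains can afford to delete vertices only at the two ends of the path (which is why Theorem~\ref{firstbound} is stated with variable sizes $n^x,n^y$ for the first and last parts), whereas splitting at an interior vertex is intractable without additional structure. A high/low degree dichotomy alone does not supply that structure, and your low-degree count $n\cdot\tau^k$ is also off, since $p_1$ need not be low-degree under your classification of interior vertices.

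The second, and more serious, gap is that the quantitative content of the theorem is asserted rather than derived. You state that balancing the two contributions ``yields a recurrence'' whose solution is $2k/5+1+\gamma_k$ with $\gamma_k\le 1/12$ and $\gamma_k\to 4/75$, but no recurrence is written down, the claim that $2k/5$ arises as a ``geometric mean'' of the first- and second-moment exponents is not checked (note that $\sum_p d_j(p)^2=O(n^2)$ holds trivially, which is stronger than your $O(n^{7/3})$ and already shows the heuristic is not calibrated), and you explicitly defer the uniform bound $\gamma_k\le 1/12$ and the limit $4/75$ as ``the delicate part.'' Those constants are precisely the content of the PSS argument, which proceeds by a different route (bounds for small $k$ combined with a recursion gaining roughly a factor $n^2$ when $k$ increases by $5$, built on the $\Theta(n^2)$ bound for $2$-chains and the unit-distance bound), not by the degree-threshold induction you sketch. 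As written, the proposal is a plausible plan with its central steps missing, not a proof.
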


In our main result, in two-third of the cases we almost determine the value of $C^2_k(n),$ no matter what the value of $u_2(n)$ is, by matching the lower bounds given in Theorem~\ref{PSS2}.
Further, we show that in the remaining cases determining $C^2_k(n)$ essentially reduces to determining the maximum number of unit distances.

\begin{thm}\label{main}For every integer $k\geq 1$ we have\,\footnote{In what follows $f(n)= \tilde O(g(n))$ means that there exist positive constants $c, C$ such that $ f(n)/g(n) \le C\log^{c} n$ for every sufficiently large $n.$ We write $f(n) = \tilde{\Omega}(g(n))$ if $g(n) = \tilde O(f(n))$, and $f(n) = \tilde{\Theta}(g(n))$ if $f(n) = \tilde O(g(n))$ and $g(n) = \tilde O(f(n)).$}
\[C^2_k(n)=\tilde{\Theta}\left (n^{\lfloor (k+1)/3 \rfloor+1}\right ) \textrm{ if }  k\equiv 0,2 \text{ \rm (mod } 3 \text{\rm )},\]
and for any $\varepsilon> 0$ we have
\[C^2_k(n)=\Omega\left (n^{(k-1)/3}u_2(n)\right) 
\textrm{ and } C^2_k(n)=O\left(n^{(k-1)/3+\varepsilon}u_2(n)\right)  \textrm { if } k\equiv 1 \text{ \rm(mod $3$)}. \]
\end{thm}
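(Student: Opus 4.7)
My plan is induction on $k$ built around three straightforward base cases and one non-trivial inductive step. The base cases are $C_0^2(n)=n$ (trivial), $C_1^2(n)=u_2(n)$ (by definition), and $C_2^2(n)\le 2n^2$; for the last of these, observe that in any 2-chain $(p_1,p_2,p_3)$ the middle point must lie in $C(p_1,\delta_1)\cap C(p_3,\delta_2)$, an intersection of two circles and hence of size at most two. The inductive step is the master inequality
\[
(\star)\qquad C_{k+3}^2(n)\;\le\;\tilde O(n)\cdot C_k^2(n),\qquad k\ge 0,
\]
and iterating $(\star)$ from each of the three base cases immediately gives the three claimed families of upper bounds: $C_{3m}^2\le\tilde O(n^{m+1})$, $C_{3m+1}^2\le\tilde O(n^{m})\cdot u_2(n)$, and $C_{3m+2}^2\le\tilde O(n^{m+2})$. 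The subpolynomial $n^\varepsilon$-loss in the $k\equiv 1\pmod 3$ upper bound appears at the point where $(\star)$ is proved, because the relevant incidence bound for unit circles carries an unavoidable $n^\varepsilon$. For the lower bounds I would invoke the constructions in Theorem~\ref{PSS2} for $k\equiv 0,2\pmod 3$; for $k\equiv 1\pmod 3$ I would take an extremal configuration for $u_2(n)$ and glue flower gadgets of the appropriate radii at both endpoints of a near-$u_2(n)$ edge to inflate the chain count by the required $n^{(k-1)/3}$ factor.

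To prove $(\star)$, I would cut a $(k+3)$-chain at its $(k+1)$-th vertex $q\in P$ into a $k$-chain ending at $q$ and a 3-chain starting at $q$. Writing $A_k(q)$ and $B_3(q)$ for the corresponding counts, we have
\[
C_{k+3}^2(n)=\sum_{q\in P}A_k(q)B_3(q),
\]
with $\sum_q A_k(q)=C_k^2(n)$ and $\sum_q B_3(q)=C_3^2(n)=\tilde O(n^2)$, and the target is that this sum is at most $\tilde O(n)\cdot C_k^2(n)$. Partitioning $P$ dyadically into level sets $Q_i=\{q\in P:A_k(q)\in[2^i,2^{i+1})\}$ (so that $2^i|Q_i|\le C_k^2$) and using the key sub-lemma that for any $Q\subseteq P$ the number of 3-chains starting in $Q$ is $\tilde O(n|Q|)$, one gets $\sum_{q\in Q_i}A_k(q)B_3(q)\le 2^{i+1}\cdot\tilde O(n|Q_i|)=\tilde O(n)\cdot 2^i|Q_i|\le\tilde O(n)\cdot C_k^2$, and summing the $O(\log n)$ levels yields $(\star)$. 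The sub-lemma itself is an ``asymmetric'' version of the $k=3$ base case, proved by the same incidence-geometric argument.

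The main obstacle is thus the base case $k=3$, namely $C_3^2(n)=\tilde O(n^2)$ for an arbitrary $n$-point set. Expanding
\[
C_3^2(n)=\sum_{\substack{(p_2,p_3)\in P^2\\\|p_2-p_3\|=\delta_2}} d_{\delta_1}(p_2)\,d_{\delta_3}(p_3),
\]
the natural approaches fall short of the target: a dyadic decomposition of the degrees combined with the Szemerédi--Trotter bound for unit circles gets stuck at $\tilde O(n^{22/9})$, while Cauchy--Schwarz combined with the sharp estimate $\sum_p d_\delta(p)^2=O(n^2)$ (a consequence of two distinct circles meeting in at most two points) gives only $\tilde O(n^{5/2})$. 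To reach the sharp $\tilde O(n^2)$ bound I would exploit the structural observation that, for each $p_3$, the unit circles $\{C(p_2,\delta_1):p_2\in C(p_3,\delta_2)\cap P\}$ all have centres lying on the single circle $C(p_3,\delta_2)$, forming a 1-parameter subfamily of unit circles for which sharper point-circle incidence estimates are available, and balance this refined bound carefully against the dyadic degree decomposition. Pushing the exponent down from the various natural $n^{22/9},\ n^{7/3},\ n^{5/2}$ bounds to the sharp $n^2$ is the step I expect to require the most delicate argument.
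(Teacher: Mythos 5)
The decisive step of your plan is false. The sub-lemma asserting that for every $Q\subseteq P$ the number of $3$-chains starting in $Q$ is $\tilde O(n|Q|)$ fails already for $|Q|=1$: fix $q$ and $\delta_1,\delta_2,\delta_3$, and take (after rescaling so that the relevant unit is $\delta_3$) a configuration as in Proposition~\ref{Propprob} --- sets $X_1,X_2$ of at most $n$ points each, with $X_2$ of tiny diameter and $\Omega(u_2(n,n))$ pairs at distance $\delta_3$ between them. Place $q$ so that every $a\in X_2$ satisfies $|\delta_1-\delta_2|<\|q-a\|<\delta_1+\delta_2$, and for each $a\in X_2$ add one connector point $p_2(a)$ lying on both the circle of radius $\delta_1$ around $q$ and the circle of radius $\delta_2$ around $a$. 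The resulting set has $O(n)$ points, and every $\delta_3$-edge $(a,b)\in X_2\times X_1$ (apart from $O(n)$ degenerate ones) yields a distinct $3$-chain $(q,p_2(a),a,b)$. Hence a single point can anchor $\Omega(u_2(n))\ge n^{1+c/\log\log n}$ three-chains, which is not $\tilde O(n)$, and your level-set proof of $(\star)$ collapses: nothing prevents the points with the largest $A_k(q)$ from each anchoring far more than $\tilde O(n)$ three-chains. This is precisely the obstruction described in the Remark after Theorem~\ref{firstbound}: cutting three vertices out of the chain at an interior anchor cannot produce a clean factor $\tilde O(n)$, because $3$-chain counts rooted at a point are governed by $u_2$, not by $n$. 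Two further warning signs: your scheme would give the $k\equiv 1$ upper bound with only polylogarithmic loss, strictly stronger than the $n^{\varepsilon}$-loss bound the paper proves; and the $n^\varepsilon$ loss does not come from the incidence bound (\eqref{2drich} is $\varepsilon$-free) but, in the paper, from the iterated covering of Section~\ref{sec34}.

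Moreover, the base case of your induction is not established either: you concede that your attempts at $C_3^2(n)=\tilde O(n^2)$ stall at exponents $22/9$ and $5/2$ and only sketch a hoped-for refinement. In the paper this is not where the difficulty lies: $C_3(n,n,n,n)=\tilde O(n^2)$ follows from Theorem~\ref{firstbound}, whose proof strengthens the induction hypothesis to $C_k(n^x,n,\dots,n,n^y)$, decomposes dyadically by the richness of the second and second-to-last classes with respect to the adjacent end classes, and peels vertices off the \emph{ends} of the path; that is the workable substitute for your $(\star)$ in the $k\equiv 0,2$ cases, while $k\equiv 1$ requires the much more elaborate two-sided regularisation of Sections~\ref{sec33}--\ref{sec34}. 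Finally, your lower-bound sketch for $k\equiv 1$ omits the clustering device: to extend all $\approx u_2(n)$ edges of an extremal configuration by chain gadgets one needs one side of that configuration to have small diameter (exactly Proposition~\ref{Propprob}), since otherwise the gadget circles need not reach every endpoint.
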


Let us turn our attention to the  $3$-dimensional case. The following was proved in \cite{Shef}.

\begin{thm}[Palsson, Senger, and Sheffer \cite{Shef}]\label{thmshefr3} For any integer $k\geq 2$, we have
\[C^3_k(n)=\Omega \left (n^{\lfloor k/2 \rfloor +1}\right ),\]
and 
\begin{equation*}
  C^3_k(n) =
    \begin{cases}
      O\left (n^{2k/3+1}\right ) & \text{\rm if $k\equiv 0$ (mod $3$),}\\
      O\left (n^{2k/3+23/33+\varepsilon}\right ) & \text{\rm if $k\equiv 1$ (mod $3$),}\\
      O\left (n^{2k/3+2/3}\right ) & \text{\rm if $k\equiv 2$ (mod $3$).}
    \end{cases}       
\end{equation*}
\end{thm}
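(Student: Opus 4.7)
For $C^3_k(n)=\Omega(n^{\lfloor k/2\rfloor +1})$, I would exploit the fact that in $\R^3$ the intersection of two generic spheres is a circle. Fix $\lceil k/2\rceil$ ``pivot'' points $q_1,\ldots,q_{\lceil k/2\rceil}$ in general position in $\R^3$, to be placed in alternating positions of the chain, and consider the $\lfloor k/2\rfloor+1$ loci carved out by the distance constraints at the remaining positions. Each such locus is a proper sphere (at the ends of the chain, where only one distance constraint is imposed) or a proper circle (in the interior, where two sphere constraints intersect). Placing $\Theta(n/(\lfloor k/2\rfloor+1))$ additional points on each locus and then choosing one free point per locus together with the pivots yields $\Omega(n^{\lfloor k/2\rfloor+1})$ valid $k$-chains.

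\textbf{Base case and induction for $k\equiv 0,2\pmod 3$.} The crucial seed inequality is $C^3_2(n)=O(n^2)$: each $2$-chain is recorded by its endpoints $(p_1,p_3)$ together with a middle vertex lying on the circle $S_{\delta_1}(p_1)\cap S_{\delta_2}(p_3)$, so counting reduces to an incidence problem between $n$ points and at most $\binom{n}{2}$ circles in $\R^3$, to which the Clarkson--Edelsbrunner--Guibas--Sharir--Welzl bound $I(m,r)=O((mr)^{2/3}+m+r)$ delivers $O(n^2)$. Combined with the trivial estimate $C^3_{k+1}(n)\le n\cdot C^3_k(n)$ this already gives $C^3_3(n)=O(n^3)$. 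The inductive step
\[C^3_{k+3}(n)\;\le\; C^3_k(n)\cdot\max_{v\in P}\bigl(\#\text{ of $3$-chains starting at }v\bigr),\]
with the per-vertex factor controlled by $O(n^2)$ via the same circle-incidence estimate applied to the chain re-rooted at $v$, then propagates the bases to $O(n^{2k/3+2/3})$ for $k\equiv 2\pmod 3$ and to $O(n^{2k/3+1})$ for $k\equiv 0\pmod 3$.

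\textbf{The hard case $k\equiv 1\pmod 3$.} When $k\equiv 1\pmod 3$ the iteration has to start from $k=1$, where the only available input is $C^3_1(n)=u_3(n)$, known in $\R^3$ only up to $O(n^{295/197+\eps})$ by Zahl (cf.~\eqref{eqzahl}). The fractional $23/33$ in the target exponent reflects balancing this substantially weaker input against the sphere-incidence machinery used in the remaining $k-1$ levels of the chain. The main technical obstacle, and the place where I would invest most care, is arranging the recursion so that the weak $u_3(n)$ bound is invoked \emph{essentially once} rather than at every level: even the better per-$3$-level step $C^3_{k+3}\le n^2 C^3_k$ above, fed from the $u_3$ base, gives only $2k/3+491/591$, which is strictly worse than the target $2k/3+23/33$. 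Closing this gap likely requires partitioning $P$ into vertices of high and low unit-distance degree and bounding the two contributions by complementary tools, substituting the Zahl bound only when processing the rich part.
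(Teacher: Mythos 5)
First, note that Theorem~\ref{thmshefr3} is quoted from Palsson--Senger--Sheffer: this paper does not prove the upper bounds at all, and of the lower bound it only recalls the construction for even $k$ (single ``pivot'' points in the even positions, $n$ points on the resulting spheres/circles in the odd positions). Your lower-bound construction is essentially that one, and it is fine up to two small repairs: the pivots cannot be taken ``in general position'' --- consecutive pivots must be placed at suitable distances (depending on the $\delta_i$) so that the two relevant spheres meet in a proper circle --- and one must arrange the points on the various loci to be distinct.

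The upper-bound part has a genuine gap. Your seed $C^3_2(n)=O(n^2)$ and your per-vertex $O(n^2)$ bound both rest on the claimed incidence bound $I(m,r)=O((mr)^{2/3}+m+r)$ for points and arbitrary circles, attributed to Clarkson et al.; no such bound is known. CEGSW gives $O(m^{3/5}r^{4/5}+m+r)$ for circles, and the current record is of Aronov--Sharir type, roughly $\tilde O(m^{2/3}r^{2/3}+m^{6/11}r^{9/11}+m+r)$; indeed, if your bound held with $m=r=n$ it would give $us_3(n)=O(n^{4/3})$, contradicting the state of the art $us_3(n)=\tilde O(n^{15/11})$ recorded in Section~\ref{sec:introduction}. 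With the correct circle bounds and $r=\Theta(n^2)$ circles you only get roughly $n^{11/5}$ (or $n^{15/7}$ from the three-dimensional refinements), not $n^2$. The way to get $C^3_2(n)=O(n^2)$, and the uniform control of extensions needed in a recursion, is the fixed-radius sphere--point incidence bound \eqref{eq3d} together with a dyadic richness decomposition --- exactly the mechanism of Theorem~\ref{3dbound}, which in fact supersedes these upper bounds by giving $\tilde O(n^{k/2+1})$. Relatedly, your step $C^3_{k+3}(n)\le C^3_k(n)\cdot\max_v(\#\text{ of }3\text{-chains starting at }v)$ is only useful once the per-vertex factor is shown to be $O(n^2)$, which is not automatic (a single vertex can start $\Theta(n^2)$ such $3$-chains, and ruling out more again needs the richness argument, not a naive circle-incidence count). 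Finally, you explicitly leave the $k\equiv 1$ case open; note that the numerology $2/3+23/33=15/11$ shows that the extra ingredient there is a point--circle (Aronov--Sharir type) incidence bound entering once along the chain, not a balancing against Zahl's bound $u_3(n)=O(n^{295/197+\varepsilon})$ from \eqref{eqzahl}, so the partition into rich and poor vertices you propose would have to be organised around circle incidences rather than around $u_3$.
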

We improve their upper bound and essentially settle the problem for even $k$.
\begin{thm}\label{3d}For any integer $k\geq 2$ we have
\[C^3_k(n)=\tilde O\left (n^{k/2+1}\right).\] Furthermore, for even $k$ we have \[C^3_k(n)=\tilde{\Theta}\left (n^{k/2+1}\right).\]
\end{thm}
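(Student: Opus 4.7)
The lower bound for even $k$ is given by Theorem~C above, so I focus on the upper bound $C^3_k(n) = \tilde O(n^{k/2+1})$.

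My plan splits into a base case and an inductive scheme. For the base case, I would first prove $C^3_2(n) = \tilde O(n^2)$. Since $d_{\delta_1}(v)d_{\delta_2}(v) \le \tfrac12(d_{\delta_1}(v)^2 + d_{\delta_2}(v)^2)$, this reduces to bounding $\sum_v d_\delta(v)^2$ for a fixed $\delta$. Rewriting this sum as $\sum_{a,b\in P}|P \cap C(a,b)|$, where $C(a,b) = S_\delta(a) \cap S_\delta(b)$ is the intersection of two spheres of radius $\delta$ centred at $a$ and $b$, the quantity becomes an incidence count between $n$ points and $\le \binom{n}{2}$ circles in $\R^3$. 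A Szemer\'edi--Trotter-type point-circle incidence bound in $\R^3$ (of Aronov--Koltun--Sharir flavour), combined with $u_3(n) = O(n^{3/2})$ of Kaplan--Matou\v sek--Safernov\'a--Sharir and Zahl to absorb diagonal or degenerate contributions, then yields the desired $\tilde O(n^2)$ estimate.

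For the inductive part, I would derive $C^3_k(n) = \tilde O(n^{k/2+1})$ for $k \ge 3$ from the Cauchy--Schwarz identity
\[
C^3_k(n) \;=\; \sum_v G_{k-1}(v)\, d_{\delta_k}(v) \;\le\; \Bigl(\sum_v G_{k-1}(v)^2\Bigr)^{1/2}\Bigl(\sum_v d_{\delta_k}(v)^2\Bigr)^{1/2},
\]
where $G_{k-1}(v)$ counts $(k-1)$-chains ending at $v$. The second factor is $\tilde O(n)$ by the base case. The first counts pairs of $(k-1)$-chains sharing an endpoint; reversing one chain identifies this with a $(2k-2)$-chain count with a \emph{folded} symmetric distance sequence, so $\sum_v G_{k-1}(v)^2 \le C^3_{2k-2}(n)$. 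Substituting the theorem's bound at length $2k-2$ yields the required $\tilde O(n^{k/2+1})$.

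Since this Cauchy--Schwarz reduction invokes $C^3_{2k-2}$ -- a strictly \emph{longer} chain -- there is apparent circularity. I would resolve this by proving the even cases first, by an induction on $m$ where $k = 2m$. The even-$k$ induction would group consecutive pairs of edges into ``super-edges'' with weights $T_i(a,b) = |P \cap S_{\delta_{2i-1}}(a)\cap S_{\delta_{2i}}(b)|$, whose total weight $\sum_{a,b}T_i(a,b) = C^3_2(n) = \tilde O(n^2)$ is controlled by the base case, and iterate the base-case estimate along the chain, so that each step reduces to a strictly smaller even sub-problem. Having established the even bounds, the odd cases $k=2m+1$ follow directly from the Cauchy--Schwarz identity above using the already-established even bound $C^3_{4m}(n) = \tilde O(n^{2m+1})$ to estimate $\sum_v G_{2m}(v)^2$. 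The main obstacle is the base case: using only $u_3(n) = O(n^{3/2})$ naively yields only $\tilde O(n^{5/2})$ for $\sum_v d_\delta(v)^2$, and the improvement to $\tilde O(n^2)$ requires a genuine point-circle incidence theorem in three dimensions, with the non-degeneracy hypotheses verified for our specific family of intersection circles; a secondary hurdle is organising the even-$k$ induction so that no step is circular.
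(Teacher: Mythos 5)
Your overall architecture (lower bound from the known construction, plus an upper bound by induction) matches the paper's intent, but both load-bearing steps of your upper bound have genuine gaps. First, the base case: rewriting $\sum_v d_\delta(v)^2$ as incidences between $n$ points and up to $\binom{n}{2}$ circles $S_\delta(a)\cap S_\delta(b)$ in $\R^3$ does not yield $\tilde O(n^2)$ with known tools. The Aronov--Koltun--Sharir-type bound for points and circles in $\R^3$ is $O^{*}\bigl(m^{6/11}N^{9/11}+m^{2/3}N^{2/3}+m+N\bigr)$, and with $m=n$, $N=\Theta(n^2)$ the first term is $n^{24/11}$, which exceeds your target $n^2$; so the "genuine point-circle incidence theorem" you flag as the main obstacle is in fact insufficient. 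Moreover the detour is unnecessary: the bound $\sum_v d_\delta(v)^2=\tilde O(n^2)$ follows directly from the asymmetric sphere--point incidence bound $u_3(m,n)=O(m^{3/4}n^{3/4}+m+n)$ (which the paper records as its inequality for $u_3$) via a dyadic decomposition by degree: the number of points of $\delta$-degree about $t$ is $O(n^3/t^4+n/t)$, and summing $t^2$ times this over dyadic $t$, together with the trivial bound $t\le n$, gives $\tilde O(n^2)$. So your identified "main obstacle" is both the wrong tool and not actually an obstacle.

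Second, the inductive scheme. The odd case via Cauchy--Schwarz is fine once all even cases are available, but the even-$k$ induction is where the content must lie, and your sketch does not close. Controlling each super-edge only through its total weight $\sum_{a,b}T_i(a,b)=\tilde O(n^2)$ cannot give $n^{k/2+1}$: abstractly, weighted walk counts with matrices of total weight $n^2$ and entries at most $n$ can be as large as roughly $n^{m+1+(m-1)/2}$ (take an all-$n$ block supported on a $\sqrt n\times\sqrt n$ set of pairs), so genuine incidence-geometric input is needed at every peeling step, not just bookkeeping of total weights. Likewise, peeling one super-edge from the end costs $\max_a\sum_b T_i(a,b)$, which can be $\Omega(n^{4/3})$ (place half the points on the sphere around $a$ and realize $us_3(n)$ unit distances with the other half), too lossy to iterate. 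The paper resolves exactly this difficulty by proving the stronger statement $C^3_k(n^{x},n,\ldots,n)=\tilde O\bigl(n^{(k+1+x)/2}\bigr)$ by induction on $k$, peeling a \emph{single} vertex: classify the points of $P_2$ dyadically by their richness $n^{\alpha}$ with respect to $P_1$, bound their number by $O(n^{3x-4\alpha}+n^{x-\alpha})$ using the $u_3(m,n)$ bound, and apply the induction hypothesis with first class of size $n^{y}$; the extra parameter $x$ is what makes the one-vertex peeling close (and it handles $k=2$ automatically, with no circle incidences and no separate even/odd treatment). I recommend reworking your induction along these lines; your lower-bound citation for even $k$ is fine as is.
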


We also improve the lower bound from Theorem~\ref{thmshefr3} for odd $k$ and $\bm\delta=(1,\dots,1)$. Let $us_3(n)$ be the maximum possible number of pairs at unit distance apart in $X\times Y$, where $X$ is a set of $n$ points in $\mathbb{R}^3$ and $Y$ is a set of $n$ points on a sphere in $\mathbb{R}^3$.

\begin{prop}\label{oddk}Let $k\geq 3$ odd. Then for $\bm\delta=(1,\dots,1)$ we have
\[C^3_k(n)=\Omega\left (\max\left \{\frac{u_3(n)^k}{n^{k-1}},us_3(n)n^{(k-1)/2}\right \}\right ).\]
\end{prop}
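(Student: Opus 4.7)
I would prove the two lower bounds in the max separately, by different constructions.

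For $C_k^3(n) = \Omega(u_3(n)^k/n^{k-1})$: take Erdős's scaled integer cube $P = \{0,\ldots,M-1\}^3\cdot s$, where $M = \lceil n^{1/3}\rceil$ and $s = 1/\sqrt N$ for an integer $N$ of order $n^{2/3}$ that has $D = \Theta(n^{1/3}\log\log n)$ representations as a sum of three squares. This realises $u_3(n) = \tilde\Omega(n^{4/3})$, and every interior vertex of the unit-distance graph $G$ has degree exactly $D$. Concatenating any $k$ integer unit vectors from any interior vertex gives a walk of length $k$, so the number of walks in $G$ is at least $\Omega(nD^k) = \Omega(u_3(n)^k/n^{k-1})$ --- equivalently, the Blakley--Roy inequality $\mathbf 1^\top A^k \mathbf 1 \ge (\mathbf 1^\top A \mathbf 1)^k/n^{k-1}$ applied to the adjacency matrix. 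A walk is a chain iff no contiguous sub-sum of its step vectors is $0$. For each length $l\ge 2$ the number of $l$-tuples of unit vectors summing to $0$ is at most $D^{l-1}$ (fix the first $l-1$; the last is then determined), so summing over the $O(k^2)$ contiguous windows bounds the non-chain walks by $O_k(D^{k-1})$ per starting vertex --- negligible against $D^k$ for fixed $k$. Hence a positive fraction of walks are chains.

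For $C_k^3(n) = \Omega(us_3(n)\,n^{(k-1)/2})$ with $k = 2t+1$: zig-zag through $t$ intersecting unit spheres. Fix distinct centres $o_1,\dots,o_t\in\mathbb R^3$ with $\|o_i - o_{i+1}\|\in(0,2)$, so that consecutive unit spheres $S_i := S(o_i,1)$ meet in circles $C_i := S_i\cap S_{i+1}$. Place $\Theta(n)$ points $Y_0$ on $S_1$, $\Theta(n)$ points $Y_i$ on each $C_i$ for $1\le i\le t-1$, $\Theta(n)$ points $Y_t$ on $S_t$, and an arbitrary set $X$ of size $\Theta(n)$ such that $(X, Y_0)$ realises $\Theta(us_3(n))$ unit-distance pairs across (possible by the definition of $us_3$ after aligning its sphere with $S_1$). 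After a generic perturbation all these sets (together with the centres) are pairwise disjoint, and we scale so the total point count is $n$. Then for every unit-distance pair $(x,y_0)\in X\times Y_0$ and every $(y_1,\dots,y_t)\in Y_1\times\cdots\times Y_t$, the tuple
\[
(x,\,y_0,\,o_1,\,y_1,\,o_2,\,y_2,\,\dots,\,o_t,\,y_t)
\]
is a valid $k$-chain: the $k$ consecutive distances all equal $1$ since $y_{i-1},y_i\in S_i$ and $(x,y_0)$ was chosen unit-distance, and the $2t+2$ vertices are pairwise distinct by disjointness of the sets. The count is $\Theta(us_3(n))\cdot\prod_{i=1}^{t}|Y_i|=\Omega(us_3(n)\,n^t)$, matching the target since $t=(k-1)/2$.

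The main obstacle is the walks-versus-chains gap in the first bound: the counting of non-chain walks relies on the explicit lattice structure, and extending this to an arbitrary extremal $u_3$-configuration would require a general upper bound on closed walks in unit-distance graphs in $\mathbb R^3$. Since Erdős's lattice already gives the best known lower bound $u_3(n) = \tilde\Theta(n^{4/3})$, the argument above delivers the stated estimate up to polylogarithmic factors. The second construction has no such issue --- the only care needed is with sizes and generic distinctness.
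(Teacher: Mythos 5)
Your second construction (for the $us_3(n)n^{(k-1)/2}$ term) is correct and essentially the same as the paper's: there too the chain alternates singleton centres with unit circles/spheres, and a sphere--point-set pair realising $us_3(n)$ is attached at one end; the only care needed is the routine bookkeeping with constants and distinctness that you already mention. So that half stands.

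The first term is where there is a genuine gap, and you name it yourself but then dismiss it on false grounds. The proposition claims $C^3_k(n)=\Omega\bigl(u_3(n)^k/n^{k-1}\bigr)$ for the \emph{true, unknown} value of $u_3(n)$ --- exactly analogous to the planar $k\equiv 1 \pmod 3$ bound $\Omega\bigl(n^{(k-1)/3}u_2(n)\bigr)$, whose whole point is to be unconditional in the unit-distance function. Your argument is tied to Erd\H os's lattice, so it only yields $C^3_k(n)=\tilde\Omega\bigl(n^{k/3+1}\bigr)$, i.e.\ the bound with $u_3(n)$ replaced by its best known \emph{lower} bound; your closing justification tacitly assumes $u_3(n)=\tilde\Theta(n^{4/3})$, which is not known (the best upper bound is $n^{295/197+\eps}$), so the lattice does not ``deliver the stated estimate up to polylogarithmic factors''. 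The paper's proof needs no structure of the extremal set and no walk-versus-path correction: take any $n$-point set with $u_3(n)$ unit-distance pairs, pass to a subset $P$ with $n/2\le |P|\le n$ in which every point has at least $u_3(n)/(4n)$ unit-distance neighbours inside $P$, and build the chains greedily --- since $u_3(n)/n\to\infty$, at each of the $k$ steps there are at least $u_3(n)/(4n)-k=\Omega\bigl(u_3(n)/n\bigr)$ continuations avoiding the at most $k$ previously used vertices, giving $\Omega\bigl(n\,(u_3(n)/n)^k\bigr)$ $k$-chains. That greedy extension in a min-degree subgraph is the missing idea; it replaces both your Blakley--Roy walk count and the lattice-specific cancellation estimate. (A secondary point: even within the lattice, the claim that every ``interior'' vertex has degree exactly $D$ needs care, since representation vectors of $N\approx n^{2/3}$ can have coordinates comparable to the cube side $M$, so one should instead argue that a constant fraction of translates of each representation vector stays inside the cube.)
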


By using stereographic projection we obtain that $us_3(n)$ equals the maximum number of incidences between a set of $n$ points and a set of $n$ circles (not necessarily of the same radii) in the plane. Thus we have
\[cn^{4/3}\leq us_3(n)=\tilde{O}\left ( n^{15/11} \right )\]
(For the lower bound see \cite{us1}, and for the upper bound see \cite{us2,AS,us3}).)
Therefore, in general we cannot tell which of the two bounds in Proposition~\ref{oddk} is better. However, for large $k$ the second term is larger than the first due to \eqref{eqzahl}.

Finally, we note that
for $d\geq 4$ we have $C_k^d(n)=\Theta(n^{k+1})$. Indeed, we clearly have $C_k^d(n)=O(n^{k+1})$. To see that $C_k^d(n)=\Omega(n^{k+1})$, take two orthogonal circles of radius $1/\sqrt{2}$ centred at the origin and choose $n/2$ points on each of them. Then any sequence of $k+1$ points that alternate between the two circles forms a path in which all edges have unit length. The exact value of $u_d(n)$ for large $n$ and even $d\geq 4$ was determined by Brass \cite{brass} ($d=4)$ and Swanepoel \cite{Sw} ($d\geq 6$), by using stability results form extremal graph theory.

\section{Preliminaries}

We denote by $u_d(m,n)$ the maximum number of incidences between a set of $m$ points and $n$ spheres\footnote{circles, if $d=2$} of fixed radius in $\mathbb{R}^d$. In other words, $u_d(m,n)$ is the maximum number of red-blue pairs spanning a given distance in a set of $m$ red and $n$ blue points in $\mathbb{R}^d$. By the result of Spencer, Szemerédi and Trotter \cite{SST},  we have
\begin{equation}\label{2drich}
    u_2(m,n)=O\left (m^{\frac{2}{3}}n^{\frac{2}{3}}+m+n\right).
\end{equation}

For given $r$ and $\delta$ we say that a point $p$ is  \emph{$r$-rich} with respect to a set $P\subseteq \mathbb{R}^d$ and to a distance $\delta$, if the sphere of radius $\delta$ around $p$ contains at least $r$ points of $P$. If $P\subseteq \mathbb{R}^2$ and $|P|=n^x$, then \eqref{2drich} implies that the number of points that are $n^{\alpha}$-rich with respect to $P$ and to a given distance $\delta$ is
\begin{equation}\label{2drichness}
    O\left (n^{2x-3\alpha}+n^{x-\alpha}\right ).
\end{equation}

The bound
\begin{equation}\label{eq3d}
    u_3(m,n)=O\left (m^{\frac3{4}}n^{\frac34}+m+n\right)
\end{equation}
is due to Zahl \cite{Za2} and Kaplan, Matou\v sek, Safernov\'a, and Sharir \cite{KMSS}.
It implies that for $P\subseteq \mathbb{R}^3$ with $|P|=n^x$ the number of points that are $n^{\alpha}$-rich with respect to $P$ and to a given distance $\delta$ is
\begin{equation}\label{richness}
    O\left (n^{3x-4\alpha}+n^{x-\alpha}\right ).
\end{equation}

\section{Bounds in \texorpdfstring{$\bm{\mathbb{R}^2}$}{Lg}}\label{sec3}

For a fixed $\bm\delta=(\delta_1,\dots,\delta_k)$ and $P_1\dots,P_{k+1}\subseteq \mathbb{R}^2$ we denote by $\mathcal{C}_k(P_1,\dots,P_{k+1})$ the family of $(k+1)$-tuples $(p_1,\dots,p_{k+1})$ with $p_i\in P_i$ for all $i\in[k+1]$, $\|p_i-p_{i+1}\|=\delta_i$ for all $i\in[k]$ and with $p_i\neq p_j$ for $i\neq j$.
Let $C_k(P_1,\dots,P_{k+1})=|\mathcal{C}_k(P_1,\dots,P_{k+1})|$ and 
\[C_k(n_1,\dots,n_{k+1})=\max C_k(P_1,\dots,P_{k+1}),\]
where the maximum is taken over all sets sets $P_1,\ldots, P_{k+1}$ subject to $|P_i|\le n_i$ for all $i\in [k+1]$.

We have $C^2_k(n)\leq C_k(n,\dots,n)\leq C^2_k\left((k+1)n\right)$. Indeed, for the lower bound choose $P_i=P$ for every $1\leq i \leq k+1$, and for the upper bound note that $|P_1\cup \dots \cup P_{k+1}|\leq (k+1)n$. Since we are only interested in the order of magnitude of $C^2_k(n)$ for fixed $k$, we are going to bound $C_k(n,\dots,n)$ instead of $C^2_k(n)$. 

In Section~\ref{sec31}, we are going to prove the lower bounds from Theorem~\ref{main}. In Section~\ref{sec32}, we are going to prove an upper bound on $C_k(n,\dots,n)$, which is almost tight for $k\equiv 0,2$ (mod $3$). The case  $k\equiv 1$ (mod $3$) is significantly more complicated. We will explain the case $k=4$ case separately in Section~\ref{sec33}, and then the general case in Section~\ref{sec34}.

\subsection{Lower bounds}\label{sec31}
For completeness, we present constructions for all congruence classes modulo $3$. For $k\equiv 0,2$ they were described in \cite{Shef}.

\begin{prop}\label{proplow} 
For any fixed distance-vector $\bm\delta = (\delta_1,\ldots,\delta_k)$ of positive reals we have $$C_k(n,\ldots,n)= \begin{cases} \Omega(n^{\lfloor (k+1)/3\rfloor +1}),& \mbox{if } k \equiv 0,2({\rm mod\ } 3),\\
\Omega(n^{(k-1)/3}\cdot u_2(n)), & \mbox{if } k \equiv 1({\rm mod\ } 3).
\end{cases}$$
\end{prop}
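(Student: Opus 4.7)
My plan is to give explicit constructions for each residue class of $k$ modulo $3$, built up from a small base case via a three-edge extension that multiplies the chain count by $\Omega(n)$.

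For the base cases $k=2$ and $k=3$, I would use the standard anchor-and-circle construction: fix one (resp.\ two) anchor points, place $P_2$ (and $P_3$ for $k=3$) as a singleton padded up to size $n$, and take $P_1$ and $P_{k+1}$ to be $n$ points on circles of the appropriate radii around the anchors. This yields $\Omega(n^2)$ chains and leaves the endpoint set $P_{k+1}$ on an explicit circle. For $k=1$, I would take an Erd\H{o}s unit-distance construction rescaled so that the ``richest'' integer distance $\sqrt{p}$ equals $\delta_1$; choosing $p\approx n$ as a highly composite sum of two squares, the construction has $\Omega(u_2(n))$ pairs at distance $\delta_1$ and fits in a disk of diameter $O(\delta_1)$.

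The heart of the proof is a uniform three-edge extension. Suppose we have sets $P_1,\dots,P_{k+1}$ realizing $N_k$ chains such that $P_{k+1}$ is contained in a circle of radius $r$ around an anchor $c$, or (in the Erd\H{o}s case) inside a bounded disk. Pick a new anchor $A$ so that for a constant fraction of $p_{k+1}\in P_{k+1}$ one has $\|p_{k+1}-A\|\in[|\delta_{k+1}-\delta_{k+2}|,\delta_{k+1}+\delta_{k+2}]$. Define $P_{k+2}$ by selecting, for each such $p_{k+1}$, one intersection point of the circles of radius $\delta_{k+1}$ around $p_{k+1}$ and radius $\delta_{k+2}$ around $A$ (so $|P_{k+2}|\le n$); let $P_{k+3}=\{A\}$, padded with distant points; and let $P_{k+4}$ be $n$ points on the circle of radius $\delta_{k+3}$ around $A$. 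The free choice of $p_{k+4}$ provides a factor of $n$, and $P_{k+4}$ itself lies on a circle, preserving the inductive invariant. Iterating, one obtains $\Omega(n^{m+1})$ chains for $k=3m$, $\Omega(n^{m+2})$ for $k=3m+2$, and $\Omega(n^m u_2(n))$ for $k=3m+1$.

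The main obstacle will be launching the iteration in the $k\equiv 1$ case, where the base Erd\H{o}s configuration is not confined to a single circle. I would place the first extension's anchor $A$ at a carefully chosen distance from the center of the Erd\H{o}s set, and verify that the annulus $\{x\colon \|x-A\|\in[|\delta_2-\delta_3|,\delta_2+\delta_3]\}$ intersects the Erd\H{o}s set in a region of area $\Omega_{\bm\delta}(\delta_1^2)$. Combined with the near-uniform distribution of Erd\H{o}s pair-endpoints across the grid, this guarantees that $\Omega_{\bm\delta}(u_2(n))$ Erd\H{o}s pairs $(p_1,p_2)$ admit a valid first extension; the subsequent iterations then proceed exactly as in the other cases. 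A minor bookkeeping point, handled by generic placement of all anchors and radii, is to ensure that the vertices of every produced chain are pairwise distinct.
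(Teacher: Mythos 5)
Your constructions for $k\equiv 0,2\ (\mathrm{mod}\ 3)$ are essentially the paper's: an anchor-and-circle base case together with a three-edge extension (one new circle point per old endpoint, a singleton anchor, then $n$ free points on a circle around the anchor), each extension contributing a factor of $n$; up to bookkeeping (the paper keeps the stronger invariant that the last part has diameter at most $\varepsilon$, rather than merely lying on a circle, which makes the "constant fraction of $P_{k+1}$ is extendable" step immediate), this part is fine.

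The genuine gap is in the $k\equiv 1\ (\mathrm{mod}\ 3)$ case. You take the rescaled Erd\H{o}s grid as the base configuration and assert it has $\Omega(u_2(n))$ pairs at distance $\delta_1$. That assertion is exactly the open Unit Distance conjecture: the grid gives only $n^{1+c/\log\log n}$ unit pairs, while $u_2(n)$ might be as large as $n^{4/3}$, so your argument would only yield $\Omega\bigl(n^{(k-1)/3}\cdot n^{1+c/\log\log n}\bigr)$, not the claimed $\Omega\bigl(n^{(k-1)/3}u_2(n)\bigr)$. To get the stated bound you must start from a genuine extremal configuration, about whose structure nothing is known, so your subsequent appeals to "near-uniform distribution of Erd\H{o}s pair-endpoints across the grid" and to the annulus covering an $\Omega(\delta_1^2)$-area piece of the point set are unjustified for that configuration. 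The missing idea is the paper's Proposition~\ref{Propprob}: starting from sets achieving $u_2(n,n)$, one cuts the plane by a randomly shifted coarse grid (losing at most half the unit-distance edges in expectation), translates the cells into a bounded square, and pigeonholes onto a subsquare of side $\varepsilon/2$, extracting a bipartite sub-configuration with $\Omega_\varepsilon(u_2(n,n))$ unit distances in which one side has diameter at most $\varepsilon$. This small-diameter side then serves as the $k=1$ base of the same three-edge induction, with the invariant that the last part always has diameter at most $\varepsilon$. Without this (or an equivalent localisation of an arbitrary extremal configuration), your proof of the $k\equiv 1$ case does not go through.
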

In the proof of the proposition, we shall need the following proposition, using which was suggested to us by D\"om\"ot\"or P\'alv\"olgyi.
\begin{prop}\label{Propprob}Fix $\varepsilon>0$. Then there exists $\gamma = \gamma(\varepsilon)>0$, such that for any $n$ there exist two sets $X_1,X_2$ of points on the plane, $|X_1|,|X_2|\le n$, such that:
\begin{itemize}
    \item[(i)] The diameter of $X_2$ is at most $\varepsilon;$
    \item[(ii)] The number of unit distances between $X_1$ and $X_2$ is at least $\gamma u_2(n,n).$
\end{itemize}
\end{prop}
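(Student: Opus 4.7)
The plan is to start with a near-extremal pair $Y_1, Y_2\subseteq \mathbb{R}^2$ with $|Y_i|\le n$ realizing $u_2(n,n)$ unit distances between them, and to extract subsets $X_1\subseteq Y_1$, $X_2\subseteq Y_2$ with $\mathrm{diam}(X_2)\le \varepsilon$ while retaining a $\gamma(\varepsilon)$-fraction of the unit-distance edges. The essential task is to find a dense cluster on the $Y_2$-side and keep it.

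The approach has two stages. In the first stage, I would reduce to the case where $Y_2$ is contained in a region of bounded diameter $D=D(\varepsilon)$ depending only on $\varepsilon$. The intuition is that since unit edges span distance exactly $1$, two portions of $Y_2$ separated by a large gap can only be linked via $O(n)$ ``boundary'' edges, which (once $D_0$ is taken large enough in terms of $\varepsilon$) is lower order than the super-linear quantity $u_2(n,n)\ge u_2(n) = \Omega(n^{1+c/\log\log n})$. Iteratively isolating the denser side of each such split yields a subset retaining a $(1-o(1))$-fraction of the edges and having bounded diameter.

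In the second stage, I would partition the bounding region of $Y_2$ by a uniformly shifted axis-aligned grid with cells of side $\varepsilon/\sqrt 2$, so each cell has diameter $\le \varepsilon$. There are only $O((D/\varepsilon)^2)=O_\varepsilon(1)$ nonempty cells, so by pigeonhole (averaging over shifts to handle edge effects) some cell $C$ satisfies
\[
|\{(p,q)\in Y_1\times(Y_2\cap C):\|p-q\|=1\}| \ \ge\ \gamma(\varepsilon)\, u_2(n,n),
\]
with $\gamma(\varepsilon)\asymp \varepsilon^2/D(\varepsilon)^2$. Setting $X_2:=Y_2\cap C$ (of diameter $\le \varepsilon$) and $X_1:=Y_1$ then completes the construction; both have size at most $n$.

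The main obstacle is the first stage: extremal unit-distance configurations can a priori be spread out, and controlling the effective diameter with only a sub-leading loss requires a delicate cluster/pruning argument that leans on lower bounds for $u_2(n,n)$ to guarantee the $O(n)$ boundary loss is dominated. An alternative, more concrete route---avoiding the structural reduction altogether---is to build $X_1,X_2$ explicitly: take a suitably scaled Erd\H{o}s-type integer lattice of diameter $\varepsilon$ as $X_2$ (with the lattice spacing chosen so that a ``magic'' lattice-distance with $\asymp n^{c/\log\log n}$ representations as a sum of two squares becomes equal to $1$), and let $X_1$ consist of ``rich'' points in the surrounding unit annulus, each lying at unit distance from many grid points of $X_2$ via the number-theoretic structure of the lattice; this achieves $\Omega(u_2(n))=\Omega(u_2(n,n))$ unit distances between the two sides.
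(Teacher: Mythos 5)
Your Stage 2 (pigeonhole over an $O_\varepsilon(1)$-cell fine grid inside a bounded region) is fine and is exactly how the paper finishes, but Stage 1 contains the real difficulty and your justification for it does not work. You claim that two portions of $Y_2$ separated by a large gap are ``linked via $O(n)$ boundary edges'' and that iteratively keeping the denser side retains a $(1-o(1))$-fraction of the edges. Neither claim holds for the procedure you describe: if the gap exceeds $2$, no edge is ``cut'' at all, but discarding the sparser portion discards \emph{every} edge incident to it, which can be a constant fraction of $u_2(n,n)$ (so each split retains only $\ge 1/2$, not $1-o(1)$); and if a near-extremal configuration happened to consist of many well-separated clusters, the best single cluster might capture only a $1/(\#\text{clusters})$ fraction of the edges, which can be as small as $1/n$. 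Nothing we know about near-extremal unit-distance configurations rules this out, and in fact the statement ``some bounded-diameter \emph{subset} of a near-extremal configuration captures a constant fraction of the edges'' is essentially the proposition itself (in a harder, subset-restricted form), so it cannot be assumed. The paper's way around this is the step you are missing: it cuts the configuration with a randomly shifted coarse grid of constant mesh (mesh $10$), so that each unit-length edge is cut with small probability and in expectation at least half the edges survive inside single cells, and then it \emph{translates} all the grid cells into one fixed $11\times 11$ square, producing new sets $Y_1,Y_2$ of bounded diameter with $\ge \frac12 u_2(n,n)$ unit distances. This is legitimate precisely because $X_1,X_2$ in the proposition need not be subsets of the original extremal configuration --- a freedom your plan never exploits. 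After that relocation, your fine-grid averaging argument applies verbatim and yields $\gamma(\varepsilon)\asymp\varepsilon^2$.

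Your proposed ``alternative, more concrete route'' also fails, for a different reason: a scaled Erd\H{o}s lattice construction produces $\Omega\left(n^{1+c/\log\log n}\right)$ unit distances, which is only the best known \emph{lower bound} for $u_2(n)$. The proposition demands $\gamma\, u_2(n,n)$ unit distances, i.e.\ a constant fraction of the true (unknown) maximum; concluding $\Omega(u_2(n))=\Omega(u_2(n,n))$ from the lattice construction would require knowing that the Erd\H{o}s construction is optimal, which is the open Unit Distance problem. So no explicit construction can replace the relocation argument here.
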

\begin{proof}
Take sets $Z_1,Z_2$ of $n$ points on the plane each, such that the number of unit distances between them is $u_2(n,n)$. Consider a bipartite graph $G = (Z_1\cup Z_2,E)$, where edges connect vertices at unit distance apart. Take an infinite grid formed by lines $y = 10i+\eta_1, x = 10j+\eta_2,$ where $i,j\in \mathbb Z$ and $\eta_1,\eta_2$ are chosen from $[0,10)$ uniformly at random. Then the expected number of edges of $G$ that are `cut' by a line in the grid is at most $\frac 12 u_2(n,n)$ and, therefore, putting $G' = G'(\eta_1,\eta_2) = (V, E')$ to be a subgraph of $G$ formed by all edges that are not `cut' by the grid (i.e., which endpoints lie inside the same square of the grid), there exist a choice of $\eta_1$, $\eta_2$ such that $|E'|\ge \frac 12 |E|$ and, moreover, no vertex of $G$ lies on a line of the grid. 

For a square $S$ of the grid, let $G'[S]$ be a subgraph of $G'$ induced on the vertices lying in $S$. Note that $G'$ is a disjoint union of $G'[S]$ over all possible $S$. Now translate all vertices in $G'[S]$ by an appropriate vector, so that a) all of them lie within a square $[0,11]\times [0,11]$ and b) no vertices from different translates coincide. Denote the new graph $G'' = (Y_1\cup Y_2,E'')$, where $Y_i$ is the union of translates of vertices from $Z_i$, $i=1,2$. It is clear that $|V(G'')| = |V(G')|$ and $|E''| = |E(G')|\ge \frac 12 u_2(n,n)$. Moreover, $V(G'')$ lies inside the square $[0,11]\times [0,11]$.

Put $X_1:=Y_1$. Next, partition $[0,11]$ into $O(\varepsilon^2)$ squares of side at most $\varepsilon/2$ and choose a square $C$ out of them such that the number of edges from $G''$ emanating from vertices in $X_2:=Y_2\cap C$ is maximal. We claim that $X_1,X_2$ as above satisfy the conditions of the proposition. First, it is easy to see that the diameter of $X_2$ is at most $\varepsilon.$  Second, by the choice of $C,$ the number of edges from $G''$ between $X_1$ and $X_2$ is at least $\Omega(\varepsilon^2 |E''|) = \Omega(\varepsilon^2 u_2(n,n)),$ as desired.
\end{proof}

\begin{proof}[Proof of Proposition~\ref{proplow}] We prove the following slightly stronger statement by induction on $k$. For any fixed $\varepsilon>0$ the lower bound claimed in the proposition  can be achieved on sets $P_1,\ldots, P_{k+1}$, such that the diameter of $P_{k+1}$ is at most $\varepsilon$ (with $\Omega$ depending on $\varepsilon$). 

First, we show the base of induction ($k=0,1,2$). Note that  $C_0(n) = n$ and the set $P_1$ can be chosen to have diameter at most $\varepsilon.$ For $k=1$, we can use the construction from Proposition~\ref{proplow}. For $k=2$, let $P_2 = \{x\}$ for some point $x$, and let $P_1$, $P_3$ be disjoint sets of $n$ points on arcs of length $\varepsilon$ lying on the circles around $x$ of radii $\delta_1$ and $\delta_3$, respectively. Then we have that $C_2(P_1,P_2,P_3)=n^2$. 

Next, fix a vector $\bm\delta = (\delta_1,\ldots,\delta_{k+3}),$ put $\varepsilon = \min \{ \delta_{k+1}/3,\delta_{k+2}/3\}$ and let $\bm\delta' = (\delta_1,\ldots,\delta_k)$. We apply the inductive statement with $\varepsilon$ and $\bm \delta '$, and obtain the corresponding sets $P_1,\ldots, P_{k+1}.$ Put $P_{k+3} = \{x\},$ where $x$ is an arbitrary point on the plane that is at distance  $\delta_{k+1}+\delta_{k+2}-2\varepsilon$ from some point $y$ in $P_{k+1}$.  Put $P_{k+2}$ to be a set of $|P_{k+1}|$ points on the circle $C$ of radius $\delta_{k+2}$ around $x,$ such that for each point in $P_{k+1}$ there is at least one point in $P_{k+2}$ at distance $\delta_{k+1}$. For each point $z$ in $P_{k+1}$ it is possible to choose the corresponding point for $P_{k+2}$ since the minimum of the distance between a point on the circle  $C$ and $z$ is at most $\|x-y\|-\delta_{k+2}+{\rm diam}(P_{k+1}) = \delta_{k+1}-\varepsilon<\delta_{k+1}$, while the maximum distance is at least $\|x-y\|+\delta_{k+2}-{\rm diam}(P_{k+1}) =\delta_{k+1}+2\delta_{k+2}-3\varepsilon>\delta_{k+1}$. (Note that for bounding both the maximum and the minimum distances  we used triangle inequality and the fact that $P_{k+1}$ has diameter at most $\varepsilon.$) We use flexibility in the choice of $x$ to assure that all additional points are different from the points in $P_1,\ldots, P_{k+1}$.

Finally, put $P_{k+4}$ to be a set of $n$ points on a sufficiently small arc on the circle of radius $\delta_{k+3}$ around $x.$

Since by construction every $k$-chain from $P_1\times \dots \times P_{k+1}$ can be extended to a $k+3$ chain in $P_1\times \dots \times P_{k+1}$ in at least $n$ different ways, we obtain that $C_2(P_1,\ldots, P_{k+4}) \ge n C_2(P_1,\ldots, P_{k+1})$. Further, $P_{k+4}$ can be chosen to satisfy any fixed requirement on the diameter.
\end{proof}

\subsection{Upper bound for the \texorpdfstring{$\bm{{k\equiv} 0,2}$}{Lg} (mod \texorpdfstring{$\bm{3}$}{Lg}) cases}\label{sec32}
We fix  $\bm\delta=(\delta_1,\dots,\delta_k)$ throughout the remainder of Section~\ref{sec3}. All logs are base $2$.

\begin{thm}\label{firstbound} For any fixed integer $k\geq 0$ and $x,y\in [0,1]$, we have \begin{equation*}\label{eqmain} C_k(n^{x},n,\ldots,n,n^{y})=\tilde O\left (
  n^{\frac {f(k)+x+y}{3}} \right ),
\end{equation*} where $f(k) = k+2$ if $k\equiv 2$ \emph{(mod $3$)} and $f(k) = k+1$ otherwise. 
\end{thm}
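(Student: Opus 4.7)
I propose to prove Theorem \ref{firstbound} by induction on $k$. The base cases $k=0,1,2$ are handled directly: for $k=1$, $C_1(n^x,n^y)=u_2(n^x,n^y)=O(n^{2(x+y)/3}+n^x+n^y)$ by \eqref{2drich}, and each summand is at most $n^{(2+x+y)/3}$ whenever $x,y\in[0,1]$, matching $f(1)=2$; the cases $k=0$ and $k=2$ are similar after a short calculation.

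For the inductive step, the natural move is a dyadic decomposition. Write $L_i=\{p_2\in P_2: d_1(p_2)\in[2^i,2^{i+1})\}$, with $d_1(p_2)=|\{p_1\in P_1:\|p_1-p_2\|=\delta_1\}|$, and let $\alpha=i/\log n$. By \eqref{2drichness}, $|L_i|\le\min\bigl\{n,\,O(n^{2x-3\alpha}+n^{x-\alpha})\bigr\}$. Since every $k$-chain has a unique $p_2$,
\[
C_k(P_1,\dots,P_{k+1})\le\sum_i 2^{i+1}\,C_{k-1}(L_i,P_3,\dots,P_{k+1}),
\]
and the inductive hypothesis applied with first coordinate $\alpha':=\log|L_i|/\log n$ in place of $x$ bounds each summand by $\tilde O(n^{\alpha+(f(k-1)+\alpha'+y)/3})$. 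Since there are only $O(\log n)$ dyadic levels, it suffices to verify $\alpha+(f(k-1)+\alpha'+y)/3\le(f(k)+x+y)/3$ for every regime of \eqref{2drichness}.

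The crux, and the main technical obstacle, is this exponent check. The one-step scheme is automatic only when $f(k)-f(k-1)\ge 2$, and among the three residues of $k\bmod 3$ this happens only for $k\equiv 2\pmod 3$. For $k\equiv 0\pmod 3$ I would upgrade to a two-step induction that simultaneously dyadically splits $P_k$ by its $d_k$-richness and recurses on $C_{k-2}(L_i,P_3,\dots,P_{k-1},M_j)$: since $f(k)-f(k-2)=2$ here, the exponent balances when both $|L_i|,|M_j|$ are controlled by the main term of \eqref{2drichness}, while the asymmetric regime (where only one side is rich and the linear term of $u_2(|L_i|,|M_j|)$ dominates) is absorbed using the global edge bound $\sum_i 2^i|L_i|\le u_2(n,n^x)=\tilde O(n^{2(1+x)/3})$ together with its analogue on $M_j$. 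The case $k\equiv 1\pmod 3$ is the most delicate; the scheme above still delivers the claimed bound $\tilde O(n^{(k+1+x+y)/3})$, which at $x=y=1$ matches $n\cdot u_2(n)$ up to $n^{o(1)}$ via the Spencer--Szemer\'edi--Trotter estimate $u_2(n)\le O(n^{4/3})$, but improving this to the sharper form of Theorem \ref{main} requires the dedicated arguments of Sections \ref{sec33}--\ref{sec34}.
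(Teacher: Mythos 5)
Your overall frame---induction on $k$ with variable endpoint exponents and a dyadic decomposition by richness---is the same as the paper's, and your treatment of the balanced regime matches the paper's Case 3 (one-step removal suffices for $k\equiv 2$, two-step removal when $f(k)-f(k-2)\ge 2$). The genuine gap is the rich regime, say $\alpha\ge x/2$, for $k\equiv 0,1 \pmod 3$: the ``absorption by the global edge bound'' you invoke there does not close the induction. That absorption only replaces the extension factor $n^{\alpha}$ by $n^{x-\alpha'}$ (where $n^{\alpha'}=|L_i|$), i.e.\ it gains a factor $n^{\alpha'}$, and the extremal class is exactly the one where $\alpha'$ is near $0$. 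Concretely, take $k\equiv 0\pmod 3$, $x=y=1$, and a class consisting of a single point $p_2$ with all of $P_1$ on the circle of radius $\delta_1$ around it, so $\alpha=1$, $\alpha'=0$. Your one-step bound gives $n\cdot C_{k-1}(\{p_2\},P_3,\dots,P_{k+1})=\tilde O\big(n^{(f(k)+4)/3}\big)$ (since $f(k-1)=f(k)$ for this residue), and your two-step bound, even with the other end in its main regime ($3\beta+\beta'\le 2$), gives at best $\tilde O\big(n^{1+\beta+(f(k)-2+\beta')/3}\big)=\tilde O\big(n^{(f(k)+3)/3}\big)$; both overshoot the target $\tilde O\big(n^{(f(k)+2)/3}\big)$, and the edge bound gains nothing here because $\alpha'=0$. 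The same computation shows that your unverified assertion that ``the scheme above still delivers the claimed bound'' for $k\equiv 1\pmod 3$ (a case the theorem does cover, with $f(k)=k+1$) also fails in this regime, since there $f(k)-f(k-2)=1$.

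What is missing is the paper's three-vertex removal (its Cases 1 and 2). When $\alpha\ge x/2$, the number of $\delta_1$-edges between $P_1$ and the class $P_2^{\alpha}$ is only $O(n^{\alpha})\cdot O(n^{x-\alpha})=O(n^{x})$, and each such edge $(p_1,p_2)$, together with a $(k-3)$-chain $(p_4,\dots,p_{k+1})$, extends to a $k$-chain in at most two ways, because $p_3$ must lie in the intersection of the circle of radius $\delta_2$ about $p_2$ and the circle of radius $\delta_3$ about $p_4$. Hence in this regime $C_k\le O(n^{x})\cdot 2\,C_{k-3}(P_4,\dots,P_k^{\beta},P_{k+1})$, and since $f(k-3)=f(k)-3$ for \emph{every} residue, induction gives $O(n^{x})\cdot\tilde O\big(n^{(f(k)-3+1+y)/3}\big)=\tilde O\big(n^{(f(k)+3x+y-2)/3}\big)\le\tilde O\big(n^{(f(k)+x+y)/3}\big)$ because $x\le 1$. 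This uniform drop by three is forced by the fact that $f$ increases only at every third step (cf.\ the paper's remark after the proof); a scheme that recurses only on $C_{k-1}$ or $C_{k-2}$, as yours does, cannot balance the exponents in the rich regime. Adding this case (and its mirror image at the other end --- which is why the paper runs the dyadic decomposition at both ends simultaneously and reserves the $k-2$ step, or the $k-1$ step when $k\equiv 1\pmod 3$, for the doubly-unrich case $\alpha\le x/2$, $\beta\le y/2$) turns your outline into the paper's proof.
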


Theorem \ref{firstbound} implies the upper bounds in Theorem \ref{main} for $k\equiv 0,2$ (mod $3$) by taking $x=y=1$. It is easier, however, to prove this more general statement than the upper bounds in Theorem~\ref{main} directly. Having varied sizes of the first and the last groups of points allows for a seamless use of induction.

\begin{proof}[Proof of Theorem \ref{firstbound}] The proof is by induction on $k$. Let us first verify the statement for $k\le 2.$ (Note that, for $k=0$, we should have $x=y$.) We have 
\begin{align}
C_0(n^x)&\le n^x = O\left(n^{\frac {1+x+y}3}\right), \nonumber\\
\label{k=1}
C_1(n^x,n^y)&\leq u_2(n^x,n^y)=O\left (n^{\frac{2}{3}(x+y)}+n^x+n^y\right )=O\left (n^{\frac{2+x+y}3}\right),\\
\label{ktwo} 
C_2(n^x,n,n^y)&\leq 2 n^xn^y= O\left (n^{\frac{4+x+y}{3}}\right ),
\end{align}
where \eqref{k=1} follows from \eqref{2drich} and \eqref{ktwo} follows from the fact that each pair $(p_1,p_3)$ can be extended to a $2$-chain $(p_1,p_2,p_3)$ in at most $2$ different ways.

Next, let $k\geq 3$. Take $P_1,\dots,P_{k+1}\subseteq \mathbb{R}^2$  with $|P_1|=n^x$, $|P_{k+1}|=n^y$, and $|P_i|=n$ for $2\leq i \leq k$. Denote by $P_2^{\alpha}\subseteq P_2$ the set of those points in $P_2$ that are at least $n^{\alpha}$-rich but at most $2n^{\alpha}$-rich with respect to $P_1$ and $\delta_1$. Similarly, we denote by $P_k^{\beta}\subseteq P_k$ the set of those points in $P_k$ that are at least $n^{\beta}$-rich but at most $2n^{\beta}$-rich with respect to $P_{k+1}$ and $\delta_k$.

Applying a standard dyadic decomposition argument twice implies that
\[\mathcal{C}_k(P_1,P_2\dots,P_k,P_{k+1})= \bigcup_{\alpha,\beta} \mathcal{C}_k(P_1,P_2^{\alpha},P_3,\dots, P_{k-1},P_k^{\beta},P_{k+1}),\]
where the union is taken over all $\alpha,\beta\in \left \{\frac{i}{\log n}: i=0,\ldots, \lceil\log n\rceil\right\}$. Since the cardinality of the latter set is at most $\log n+2$, it is sufficient to prove that for every $\alpha$ and $\beta$ we have 
\begin{equation}\label{alphabeta}
C_k(P_1,P_2^{\alpha},P_3,\dots,P_{k-1},P_k^{\beta},P_{k+1})= \tilde O\left ( n^{\frac{f(k)+x+y}{3}}\right ).
\end{equation}

To prove this, we consider three cases.

\bigskip

{\bf Case 1:  $\bm{\alpha\geq \frac{x}{2}}$.} By \eqref{2drichness} we have $|P_2^{\alpha}|=O(n^{x-\alpha})$. Therefore the number of pairs \mbox{$(p_1,p_2)\in P_1\times P_2^{\alpha}$} with $\|p_1-p_2\|=\delta_1$ is at most $O(n^x)$. Since every pair $(p_1,p_2)\in P_1\times P_2^{\alpha}$ and every $(k-3)$-chain $(p_4,\dots,p_{k+1})\in P_4\times\dots \times P_{k}^{\beta}\times P_{k+1}$ can be extended to a $k$-chain $(p_1,\dots,p_{k+1})\in P_1\times \dots\times P_{k+1}$ in at most two different ways, we obtain
\[C_k(P_1,P_2^{\alpha},\dots,P_k^{\beta},P_{k+1})\leq 4O(n^x)C_{k-3}(P_4,\dots,P_k^{\beta},P_{k+1}).\]

By induction we have 
 \[C_{k-3}(P_4,\dots,P_k^{\beta},P_{k+1})= \tilde O\left  ( n^{\frac{f(k-3)+1+y}{3}}\right ).\]
 These two displayed formulas and the fact that $f(k-3)=f(k)-3$ imply \eqref{alphabeta}. 
\bigskip

{\bf Case 2:  $\bm{\beta \geq \frac{y}{2}}$.} By symmetry, this case can be treated in the same way as Case 1.

\bigskip

{\bf Case 3:  $\bm{\alpha \leq \frac{x}{2}}$ and $\bm{\beta\leq \frac{y}{2}}$.} By \eqref{2drichness} we have $|P_2^{\alpha}|= O\left (n^{2x-3\alpha}\right )$ and $|P_k^{\beta}|= O\left 
(n^{2y-3\beta}\right)$. The number of $(k-2)$-chains in $P_2^{\alpha}\times P_3\times\dots\times P_{k-1}\times P_k^{\beta}$ is $C_{k-2}(P_2^{\alpha},P_3,\dots,P_{k-1}, P_k^{\beta})$, and every $(k-2)$-chain 
$(p_2,\dots,p_k)\in P_2^{\alpha}\times P_3\times\dots\times P_{k-1}\times P_k^{\beta}$ can be extended at 
most $4n^{\alpha+\beta}$ ways to a $k$-chain in $P_1\times P_2^{\alpha}\times \dots\times P_k^{\beta}\times P_{k+1}$. Thus 
\[C_{k}(P_1,P_2^{\alpha},\dots,P_k^{\beta},P_{k+1})\leq 4n^{\alpha+\beta}C_{k-2}(P_2^{\alpha},\dots,P_k^{\beta}).\]

By induction we have
\[C_{k-2}(P_2^{\alpha},\dots,P_k^{\beta})= \tilde O\left (n^{\frac{f(k-2)+2x-3\alpha+2y-3\beta}{3}}\right ).\]
For $k \equiv 0,2$ (mod $3$) we have $f(k)\ge f(k-2)+2,$ and thus 
\begin{multline*}
C_{k}(P_1,P_2^{\alpha},\dots,P_k^{\beta},P_{k+1})= \tilde O\left(n^{\alpha+\beta}n^{\frac{f(k-2)+2x-3\alpha+2y-3\beta}{3}}\right)\\[4pt] = \tilde O\left(n^{\frac{f(k)-2+2x+2y}{3}}\right) = \tilde O\left(n^{\frac{f(k)+x+y}{3}}\right).
\end{multline*}

If $k\equiv 1$ (mod $3$) then $f(k)<f(k-2)+2$, and thus the argument above does not work. However, we then have $f(k)= f(k-1)+1$, and we can use the bound
\[C_{k}(P_1,P_2^{\alpha},\dots,P_k^{\beta},P_{k+1})\leq 2n^{\alpha}C_{k-1}(P_2^{\alpha},P_3,\dots,P_{k+1}),\]
obtained in an analogous way. This gives
\[C_{k}(P_1,P_2^{\alpha},P_3,\dots,P_{k+1})= \tilde O\left(n^{\alpha}n^{\frac{f(k-1)+2x-3\alpha+y}{3}}\right) = \tilde O\left(n^{\frac{f(k)-1+2x+y}{3}}\right) = \tilde O\left(n^{\frac{f(k)+x+y}{3}}\right).\]
\end{proof}

\begin{remark} The proof above is not sufficient to obtain an almost sharp bound in the $k \equiv 1$ (mod $3$) case for two reasons. First, for these $k$ any analogue of Theorem \ref{firstbound} would involve taking maximums of two expressions, where one contains $u_2(n^x,n)$ and the other contains $u_2(n^y,n)$. However, due to our lack of good understanding of how $u_2(n^x,n)$ changes as $x$ is increasing, this is difficult to work with.

Second, on a more technical side, while Case 1 and  Case 2 in the above proof would go through with any reasonable inductive statement, Case 3 would fail. The main reason for this is that $C_k$ as a function of $k$ makes jumps at every third value of $k$, and remains essentially the same, or changes by $u_2(n,n)/n$ for the other values of $k$. Thus one would need to remove three vertices from the path to make the induction work. However, the path has only two ends, and removing vertices other than the endpoints turns out to be intractable.
\end{remark}

\subsection{Upper bound for \texorpdfstring{$\bm{k=4}$}{Lg}}\label{sec33}
In this section we prove the upper bound in Theorem~\ref{main} for $k=4$. Let $P_1,\dots, P_5$ be five sets of $n$ points. We will show that $C_4(P_1,\dots,P_5)=\tilde O(u_2(n)n)$, which is slightly stronger than what is stated in Theorem \ref{main}. 

Instead of \eqref{2drichness} we need the following more general bound on the number of rich points.  
\begin{obs}[Richness bound]\label{richnessbound} Let $n^y$ be the maximum possible number of points that are $n^{\alpha}$-rich with respect to a set of $n^{x}$ points and some distance $\delta$. Then we have
\begin{equation}\label{rich1}
n^{y+\alpha}\leq u_2(n^x,n^y),
\end{equation}
or, equivalently
\begin{equation*}
n^{\alpha}\leq \frac{u_2(n^x,n^y)}{n^y}.
\end{equation*}
\end{obs}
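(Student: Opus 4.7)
The plan is a simple double-counting argument using the definition of $u_2(m,n)$ as the maximum number of pairs at a fixed distance between a set of $m$ red and $n$ blue points.

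First I would fix a set $P$ of $n^x$ points and a distance $\delta$ realising the maximum, and let $Q$ be the set of all centres that are $n^{\alpha}$-rich with respect to $P$ and $\delta$, so that $|Q| = n^y$ by hypothesis. The key observation is that an incidence between $p \in P$ and the circle of radius $\delta$ around $q \in Q$ is exactly the same thing as a pair $(p,q) \in P \times Q$ with $\|p - q\| = \delta$.

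Next I would lower-bound this count of pairs. Since every $q \in Q$ is $n^{\alpha}$-rich with respect to $P$ and $\delta$, each $q$ contributes at least $n^{\alpha}$ such pairs, giving at least $|Q|\, n^{\alpha} = n^{y+\alpha}$ pairs in total. On the other hand, the total number of pairs $(p,q) \in P \times Q$ at distance $\delta$ is at most $u_2(n^x, n^y)$ by the very definition of the function $u_2(\cdot,\cdot)$ recalled in Section~2 (treating $P$ as the red set and $Q$ as the blue set). Combining these two bounds yields $n^{y+\alpha} \le u_2(n^x, n^y)$, and dividing by $n^y$ gives the equivalent form.

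I do not anticipate any obstacle: the statement is essentially a restatement of the incidence definition of $u_2$, and the whole argument is a one-line double count. The only thing to be mildly careful about is that $|Q|$ is the \emph{maximum} over the choice of $P$ and $\delta$, but since the inequality is proved for an arbitrary configuration achieving (or approaching) this maximum, the bound holds as stated.
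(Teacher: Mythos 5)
Your double-counting argument is correct and is exactly what the paper intends: the paper simply states that \eqref{rich1} ``follows immediately from the definition of $n^{\alpha}$-richness and $u_2(n^x,n^y)$,'' and your spelled-out count (each of the $n^y$ rich points yields at least $n^{\alpha}$ distance-$\delta$ pairs, while $u_2(n^x,n^y)$ bounds the total from above) is precisely that one-line argument, with the remark about maximality handled appropriately.
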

The proof of \eqref{rich1} follows immediately from the definition of $n^{\alpha}$-richness and $u_2(n^x,n^y)$.

\medskip

Let $\Lambda:=\big\{\frac i{\log n}: i = 0,\ldots, \lceil\log n\rceil \big\}^4$. For any $\bm{\alpha}=(\alpha_2,\alpha_3,\alpha_4,\alpha_5) \in \Lambda$ let $Q_1^{\bm{\alpha}} = P_1$ and for $i=2,\ldots, 5$ define recursively $Q_i^{\bm\alpha}$ to be the set of those points in $P_i$ that are at least $n^{\alpha_i}$-rich but at most $2n^{\alpha_i}$-rich with respect to $Q_{i-1}$ and $\delta_i$.

It is not difficult to see that
\[\mathcal C_4(P_1,\ldots, P_5)= \bigcup_{\bm\alpha\in \Lambda} \mathcal{C}_4\left(Q_1^{\bm\alpha},\ldots,Q_5^{\bm\alpha}\right).\]
We have $|\Lambda| = \tilde O(1)$ and thus, in order to prove the theorem, it is sufficient to show that for every $\bm{\alpha}\in \Lambda$ we have 
\begin{equation*}
C_4\left(Q_1^{\bm{\alpha}},\dots,Q_5^{\bm{\alpha}}\right)= O\left(n \cdot u_2(n,n)\right).
\end{equation*}

From now on, fix $\bm{\alpha}=(\alpha_2,\dots,\alpha_5)$, and denote $Q_i=Q_i^{\bm\alpha}$. Choose $x_i\in [0,1]$ so that $|Q_i|=n^{x_i}$. Then we have \begin{equation}\label{eq51} C_4(Q_1,\dots,Q_5)= O\left(n^{x_5+\alpha_5+\alpha_4+\alpha_3+\alpha_2}\right).\end{equation}
Indeed, each chain $(p_1,\dots,p_5)$ with $p_i\in Q_i$ can be obtained in the following five steps.
\vspace{0.2cm}
\begin{itemize}
    \item \bf{Step 1: } \rm Pick $p_5\in Q_5$.
    \vspace{0.1cm}
    \item \bf{Step i ($2\le i\le 5$): } \rm Pick a point $p_{6-i}\in Q_{6-i}$ at distance $\delta_{6-i}$ from $p_{7-i}$.
\end{itemize}
\vspace{0.2cm}
In the first step we have $n^{x_5}$ choices, and for $i\geq 2$ in the $i$-th step we have at most $2n^{\alpha_{6-i}}$ choices. Further, by Observation~\ref{richnessbound}, for each $i\geq 2$ we have \begin{equation}\label{eq52}n^{\alpha_i}\leq \frac{u_2(n^{x_{i-1}},n^{x_i})}{n^{x_i}}.\end{equation}
Combining \eqref{eq51} and \eqref{eq52}, we obtain
\begin{equation}\label{eq53}
C_4(Q_1,\dots,Q_5)=O\left( u_2(n^{x_4},n^{x_5})\frac{u_2(n^{x_3},n^{x_4})}{n^{x_4}}\frac{u_2(n^{x_2},n^{x_3})}{n^{x_3}} \frac{u_2(n^{x_1},n^{x_2})}{n^{x_2}}\right).
\end{equation}
By \eqref{2drich} we have \[u_2(n^{x_{i-1}},n^{x_i})= O\left (\max \big\{n^{\frac{2}{3}(x_i+x_{i-1})},n^{x_i},n^{x_{i-1}}\big\}\right ).\]

Note that the maximum is attained on the second (third) term iff $x_{i-1}\le \frac {x_i}2$ ($x_i\le \frac{x_{i-1}}2$).
To bound $C_4(Q_1,\dots,Q_5)$
we consider several cases depending on which of these three terms the maximum above is attained on for different $i$. 

\bigskip

\bf{Case 1: }\rm For all $2\leq i \leq 5$ we have 
$u_2(n^{x_{i-1}},n^{x_i})= O\left( n^{\frac{2}{3}(x_i+x_{i-1})}\right)$.
Then 
\begin{equation*}
\frac{u_2(n^{x_4},n^{x_5})u_2(n^{x_3},n^{x_4})u_2(n^{x_2},n^{x_3})}{n^{x_2+x_3+x_4}}=O\left( n^{\frac{2}{3}x_5+\frac{1}{3}x_4+\frac{1}{3}x_3-\frac{1}{3}x_2}\right)
\end{equation*}
and
\begin{equation*}
\frac{u_2(n^{x_3},n^{x_4})u_2(n^{x_2},n^{x_3})u_2(n^{x_1},n^{x_2})}{n^{x_2+x_3+x_4}}=O\left( n^{-\frac{1}{3}x_4+\frac{1}{3}x_3+\frac{1}{3}x_2+\frac{2}{3}x_1}\right).
\end{equation*}

Substituting each of these two displayed formulas into \eqref{eq53} and taking their product, we obtain
\[C_4(Q_1,\dots,Q_5)^2 = O\left( u_2(n^{x_1},n^{x_2})u_2(n^{x_4},n^{x_5})\cdot n^{\frac{2}{3}x_1+\frac{2}{3}x_3+\frac{2}{3}x_5}\right)= O\left( u_2(n,n)^2\cdot n^{2}\right),\]
which concludes the proof in this case.

\bigskip

\bf{Case 2: } \rm 
There is an $2\leq i \leq 5$ such that \begin{equation}\label{eq55} \min \{x_{i-1},x_i\}\le \frac 12\max\{x_{i-1},x_i\}\ \ \ \text{and thus}\ \ \ \ u_2(n^{x_{i-1}},n^{x_i})= O\left (\max \{n^{x_{i-1}},n^{x_{i}}\}\right).\end{equation} 
We distinguish three cases based on for which $i$ holds.

\bigskip 

{\bf Case 2.1:} \eqref{eq55} holds for $i=2$ or  $5$. In particular, this implies that $u_2(n^{x_{1}},n^{x_2})= O(n)$ or $u_2(n^{x_{4}},n^{x_5})= O(n)$. The following lemma finishes the proof in this case.

\begin{lem}\label{smalln}
Let $R_1,\ldots, R_5\subseteq \mathbb{R}^2$ such that $|R_i|\leq n$ for every $i\in [5]$. If $u_2(R_1,R_2)= O(n)$ or $u_2(R_4,R_5)= O(n)$ holds, then
$C_4(R_1,\dots,R_5)= O\left(n\cdot u_2(n,n)\right)$.
\end{lem}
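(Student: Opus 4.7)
The plan is to parametrise each $4$-chain $(p_1, p_2, p_3, p_4, p_5)$ by its two outer edges together with the middle vertex $p_3$, and to use the fact that once those outer edges are fixed, $p_3$ is cut out as the intersection of two circles. By reversing the chain (sending $(p_1, \ldots, p_5) \mapsto (p_5, \ldots, p_1)$, which exchanges $R_1, R_2$ with $R_5, R_4$ and reverses $\bm\delta$), we may assume without loss of generality that $u_2(R_1, R_2) = O(n)$. Let $E_1$ denote the set of pairs in $R_1 \times R_2$ at distance $\delta_1$, and $E_4$ the set of pairs in $R_4 \times R_5$ at distance $\delta_4$. Then $|E_1| = O(n)$ by assumption, while trivially $|E_4| \le u_2(n, n)$.

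Next, I would observe that every valid $4$-chain is specified by a triple $\bigl((p_1, p_2),\, (p_4, p_5),\, p_3\bigr)$ with $(p_1, p_2) \in E_1$, $(p_4, p_5) \in E_4$, and $p_3 \in R_3$ meeting the two remaining distance constraints $\|p_2 - p_3\| = \delta_2$ and $\|p_3 - p_4\| = \delta_3$. With the two outer edges fixed, $p_3$ must lie in the intersection of the circle of radius $\delta_2$ around $p_2$ and the circle of radius $\delta_3$ around $p_4$. Crucially, since the vertices of a chain are distinct, $p_2 \ne p_4$, so the two circles have distinct centres and hence meet in at most two points. Therefore there are at most two valid choices for $p_3$ given the outer edges.

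Combining these observations yields
\[C_4(R_1, \dots, R_5) \le 2\,|E_1|\cdot|E_4| = O\bigl(n \cdot u_2(n,n)\bigr),\]
which is exactly the desired bound. The only subtle point is the appeal to distinctness $p_2 \ne p_4$ to rule out the degenerate concentric-circle case, where the intersection could otherwise contain arbitrarily many points of $R_3$; apart from this one geometric observation the argument is a clean double count, and I do not anticipate any real obstacle.
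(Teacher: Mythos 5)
Your proof is correct and follows essentially the same route as the paper: both arguments count the outer edges, bounding the number of $\delta_1$-pairs in $R_1\times R_2$ by $O(n)$ and the number of $\delta_4$-pairs in $R_4\times R_5$ by $u_2(n,n)$, and observe that each such quadruple extends to at most two chains since $p_3$ lies on the intersection of two circles (your explicit use of $p_2\neq p_4$ just makes precise what the paper leaves implicit via the distinctness of chain vertices).
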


\begin{proof} We have \[C_4(R_1,\dots,R_5)\leq 2u_2(R_1,R_2)u_2(R_4,R_5)=O\left(n\cdot u_2(n,n)\right).\] Indeed,  every $4$-tuple $(r_1,r_2,r_4,r_5)$ with $r_i\in R_i$ can be extended in at most two different ways to a $4$-chain $(r_1,\dots,r_5)\in R_1\times\dots\times R_5$. At the same time, the number of $4$-tuples with $\|r_1-r_2\|=\delta_1$, $\|r_4-r_5\|=\delta_4$ is at most $u_2(R_1,R_2)u_2(R_4,R_5)$.
\end{proof}

\bigskip

\bf{Case 2.2: }\rm \eqref{eq55} holds for $i=4.$ Note that if  $x_4\leq \frac{x_3}{2}\le \frac 12$, then  $u_2(n^{x_5},n^{x_4})=O(n)$, and we can apply Lemma~\ref{smalln} to conclude the proof in this case. Thus we may assume that $x_3\le \frac{x_4}2$, and hence $u_2(n^{x_4},n^{x_3})=O(n^{x_4})$.
This means that $n^{\alpha_4}=O(1)$ by Observation \ref{richnessbound}. Thus to finish the proof of this case, it is sufficient to prove the following claim.
\begin{cla}
Let $R_1,\dots,R_5\subseteq \mathbb{R}^2$ such that $|R_i|\leq n$ for all $i\in [5]$ and every point of $R_4$ is $O(1)$ rich with respect to $R_3$ and $\delta_3$. Then $C_4(R_1,\dots,R_5) = O\left(n\cdot u_2(n,n)\right)$.
\end{cla}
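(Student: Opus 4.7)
The plan is to exploit a simple but crucial observation that bypasses essentially all the technical machinery one might be tempted to set up: for every $p_3\in R_3$,
\[
A(p_3):=\bigl|\{(p_1,p_2)\in R_1\times R_2:\ \|p_1-p_2\|=\delta_1,\ \|p_2-p_3\|=\delta_2\}\bigr|\le 2|R_1|\le 2n.
\]
Indeed, once $p_1\in R_1$ is fixed, the point $p_2$ must lie on both the circle of radius $\delta_2$ around $p_3$ and the circle of radius $\delta_1$ around $p_1$; two distinct circles in $\mathbb{R}^2$ meet in at most two points, giving at most two valid $p_2$ per $p_1$. This uniform pointwise bound on $A$ is the crux of the whole argument.

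Next I would partition the count of $4$-chains by their ``middle edge'' $(p_3,p_4)\in E_{34}:=\{(p_3,p_4)\in R_3\times R_4:\|p_3-p_4\|=\delta_3\}$. Setting $B(p_4):=|\{p_5\in R_5:\|p_4-p_5\|=\delta_4\}|$, one has
\[
C_4(R_1,\dots,R_5)=\sum_{(p_3,p_4)\in E_{34}} A(p_3)\cdot B(p_4)\le 2n\sum_{(p_3,p_4)\in E_{34}} B(p_4).
\]
By the hypothesis, every $p_4\in R_4$ appears in at most $C=O(1)$ pairs of $E_{34}$, so the inner sum is bounded by $C\sum_{p_4\in R_4} B(p_4)= C\cdot u_2(R_4,R_5)=O(u_2(n,n))$. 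Combining the two inequalities yields $C_4(R_1,\dots,R_5)=O(n\cdot u_2(n,n))$, as desired.

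The main obstacle is really just spotting the uniform $A(p_3)\le 2n$ inequality. A naive dyadic decomposition of $R_3$ by $\mu(p_3):=|R_2\cap C(p_3,\delta_2)|$ combined with \eqref{2drich} would only give $A(p_3)\le O(u_2(n,\mu(p_3)))$, which can be as large as $n^{4/3}$ and leaves the remainder of the argument short by a factor of $n^{1/3}$. The ``two circles meet in at most two points'' observation exploits the geometric rigidity of $\mathbb{R}^2$ to replace this by the far stronger $2n$, after which the $O(1)$-richness of $R_4$ closes the sum through a single application of the unit-distance bound $u_2(R_4,R_5)=O(u_2(n,n))$.
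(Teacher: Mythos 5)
Your proof is correct and is essentially the paper's own argument: both bound the chain count by the $O(u_2(n,n))$ unit-distance pairs between $R_4$ and $R_5$, the $O(1)$-richness of $R_4$ with respect to $R_3$, a free factor of $n$ for $p_1$, and the ``two circles intersect in at most two points'' bound for $p_2$. The only difference is cosmetic (you sum over the $R_3$--$R_4$ edges and fold in the richness afterwards, while the paper picks the $(r_4,r_5)$ pair first), so the two proofs coincide in substance.
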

\begin{proof} Every $4$-chain $(r_1,\dots,r_5)$ can be obtained in the following steps.
\vspace{0.2cm}
\begin{itemize}
    \item Pick a pair $(r_4,r_5)\in R_4\times R_5$ with $\|r_4-r_5\|=\delta_4$.
    \vspace{0.1cm}
    \item Choose $r_3\in R_3$ at distance $\delta_3$ from $r_4$.
    \vspace{0.1cm}
    \item Pick a point $r_1\in R_1$.
    \vspace{0.1cm}
    \item Extend $(r_1,r_3,r_4,r_5)$ to a $4$-chain.
\end{itemize}

In the first step, we have at most $u_2(n,n)$ choices, in the third at most $n$ choices, and in the other two steps at most $O(1)$. 
\end{proof}

{\bf Case 2.3 } \eqref{eq55} holds for $i=3$ {\it only}. Arguing as in Case 2.2, we may assume that $u_2(n^{x_3},n^{x_2})=O(n^{x_2})$. Then we have 
\begin{multline*}
C_4(Q_1,\dots,Q_5)= O\left( u_2(n^{x_4},n^{x_5})\frac{u_2(n^{x_3},n^{x_4})}{n^{x_4}}\frac{u_2(n^{x_2},n^{x_3})}{n^{x_3}} \frac{u_2(n^{x_1},n^{x_2})}{n^{x_2}}\right)\\[10pt] 
= O\left(u_2(n^{x_1},n^{x_2})\cdot n^{\frac{2}{3}(x_4+x_5)+\frac{2}{3}(x_3+x_4)-x_4-x_3}\right)= O\left(u_2(n,n)\cdot n\right),
\end{multline*}
which finishes the proof.

\subsection{Upper bound for the \texorpdfstring{$\bm{k\equiv 1}$}{Lg} (mod \texorpdfstring{$\bm{3}$)}{Lg} case}\label{sec34}
We will prove the upper bound in Theorem \ref{main} for $k\equiv 1$ by induction. The $k=1$ case follows from the definition of $u_2(n,n)$, thus we may assume that $k\geq 4$. For the rest of the  section fix $\eps'>0$, and sets $P_1,\ldots, P_{k+1}\subseteq \mathbb{R}^2$ of size $n$, further let $\varepsilon=\frac{\varepsilon'}{4k}$. We are going to show that $C_k(P_1,\dots,P_{k+1})=O(n^{(k-1)/3+\varepsilon'}u_2(n))$. 

The first step of the proof is to find a certain covering of $P_1\times \dots \times P_{k+1}$, which resembles the one used for the $k=4$ case, although is more elaborate.\footnote{This covering brings in the $\varepsilon$-error term in the exponent, that we could avoid in the $k=4$ case.} (The goal of this covering is to make the corresponding graph between each of the two consecutive parts `regular in both directions' in a certain sense.)

Let \[\Lambda=\Big\{i\eps: i=0,\ldots, \Big\lfloor\frac 1\eps\Big\rfloor\Big\}^{k+1}.\] We cover the product ${\bf P}=P_1\times\dots\times P_{k+1}$ by fine-grained classes $P_1^{\bm{\gamma}}\times \ldots\times P_{k+1}^{\bm{\gamma}}$ encoded by the sequence $\bm{\gamma} = (\bm{\gamma^1},\bm{\gamma^2},\ldots )$ of length at most $(k+1)\eps^{-1}+1$ with $\bm{\gamma^j}\in \Lambda$ for each $j=1,2,\dots$. One property that we shall have is 
\begin{equation*} P_1\times\dots \times P_{k+1} = \bigcup_{\bm{\gamma}}P_1^{\bm\gamma}\times \ldots\times P_{k+1}^{\bm\gamma}.
\end{equation*}

To find the covering, first we define a function $D$ that receives a parity digit $j\in \{0,1\}$, a product set ${\bf R}:= R_1\times\ldots \times R_{k+1}$ and an $\bm{\alpha}\in \Lambda$, and outputs a product set $D(j,\bm{R},\bm{\alpha})={\bf R(\bm \alpha)}= R_1(\bm\alpha)\times\ldots \times R_{k+1}(\bm\alpha)$.
\medskip

{\bf Definition of $\mathbf{D}$}
\begin{itemize}
    \item If $j=1$ then let $R_1(\bm{\alpha}):=R_1$ and for $i=2,\ldots, k+1$ define $R_i(\bm{\alpha})$ iteratively to be the set of points in $R_i$ that are at least $n^{\alpha_i}$, but at most $n^{\alpha_i+\eps}$-rich with respect to $R_{i-1}(\bm{\alpha})$ and $\delta_{i-1}.$
     \item If $j=0$ then apply the same procedure, but in reverse order. That is, let $R_{k+1}(\bm{\alpha})=R_{k+1}$ and for $i=k,k-1,\dots,1$ define $R_{i}(\bm{\alpha})$ iteratively to be the set of points in $R_i$ that are at least $n^{\alpha_i}$ but at most $n^{\alpha_i+\eps}$-rich with respect to $R_{i+1}(\bm{\alpha})$ and $\delta_i$. 
\end{itemize}
\vspace{0.2cm}
Note that \begin{equation}\label{equnion}{\bf R}= \bigcup_{\bm{\alpha}\in \Lambda} {\bf R}(\bm{\alpha}).\end{equation}
For a sequence $\bm{\gamma} = (\bm{\gamma^1},\bm{\gamma^2}, \ldots)$ with $\bm{\gamma^j}\in \Lambda,$ we define ${\bf P}^{\bm\gamma}$ recursively as follows. Let ${\bf P}^{\emptyset} :={\bf P}$, and for each $j\geq 1$ let
\[{\bf{P}}^{(\bm{\gamma^1},\ldots,\bm{\gamma^{j}})}=D(j \text{ (mod }2),{\bf{P}}^{(\bm{\gamma^1},\ldots,\bm{\gamma^{j-1}})},\bm{\gamma^j}).\]
We say that a sequence $\bm{\gamma}$ is {\it stable at $j$} if \begin{equation*}\label{eqdecomp} \big|{\bf P}^{(\bm{\gamma^1},\ldots,\bm{\gamma^{j}})}\big|\ge \big|{\bf P}^{(\bm{\gamma^1},\ldots,\bm{\gamma^{j-1}})}\big|\cdot n^{-\eps}.\end{equation*}
Otherwise $\bm{\gamma}$ is \emph{unstable at $j$}.

\begin{defi}\label{defups} Let $\Upsilon$ be the set of those sequences $\bm{\gamma}$ that are stable at their last coordinate, but are not stable for any previous coordinate, and for which ${\bf P}^{\bm\gamma}$ is non-empty.
\end{defi} 

The set $\Upsilon$ has several useful properties, some of which are summarised in the following lemma. 

\begin{lem}\label{decompprop} 
1. Any $\bm{\gamma}\in \Upsilon$ has length at most $(k+1) \eps^{-1}+1$. \\ \vspace{0.1cm}
2. $|\Upsilon| = O_{\eps}(1).$ \\ \vspace{0.1cm}
3. ${\bf P} = \bigcup_{\bm{\gamma}\in \Upsilon}{\bf P}^{\bm\gamma}.$\\
\end{lem}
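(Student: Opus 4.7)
The plan is to treat all three parts using a single size-decrement observation: each unstable step of the sequence $\bm{\gamma}$ shrinks $|{\bf P}^{(\bm{\gamma^1},\ldots,\bm{\gamma^j})}|$ by a factor of at least $n^{-\eps}$, so long sequences quickly exhaust the ambient set. Since ${\bf P}^{\bm\gamma}$ is required to be nonempty, this gives the length bound; counting then yields the cardinality bound; and the covering property follows from a greedy construction using \eqref{equnion}.

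For part 1, suppose $\bm\gamma\in\Upsilon$ has length $t$. By definition $\bm\gamma$ is unstable at coordinates $1,2,\dots,t-1$, so by iterating the unstable inequality,
\[\big|{\bf P}^{(\bm{\gamma^1},\ldots,\bm{\gamma^{t-1}})}\big| < |{\bf P}|\cdot n^{-\eps(t-1)} \le n^{k+1-\eps(t-1)}.\]
Since ${\bf P}^{\bm\gamma}\subseteq {\bf P}^{(\bm{\gamma^1},\ldots,\bm{\gamma^{t-1}})}$ and ${\bf P}^{\bm\gamma}$ is nonempty, the right-hand side is at least $1$, giving $t\le (k+1)\eps^{-1}+1$. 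For part 2, every $\bm\gamma^j$ lies in $\Lambda$, where $|\Lambda|=(\lfloor\eps^{-1}\rfloor+1)^{k+1}$, and by part 1 the length of $\bm\gamma$ is at most $(k+1)\eps^{-1}+1$, so $|\Upsilon|\le \sum_{t=1}^{(k+1)\eps^{-1}+1}|\Lambda|^t = O_\eps(1)$.

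For part 3, fix any $p\in {\bf P}$ and construct $\bm\gamma\in\Upsilon$ containing $p$ greedily. Set ${\bf P}^{\emptyset}={\bf P}$, and assume inductively that ${\bf P}^{(\bm{\gamma^1},\ldots,\bm{\gamma^{j-1}})}\ni p$. By \eqref{equnion} applied with ${\bf R}={\bf P}^{(\bm{\gamma^1},\ldots,\bm{\gamma^{j-1}})}$ and the parity $j\pmod 2$, there is some $\bm{\alpha}\in\Lambda$ with $p\in D(j\pmod 2,{\bf R},\bm\alpha)$; set $\bm{\gamma^j}:=\bm\alpha$. Stop at the first index $j$ where the resulting sequence is stable. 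The very same size-decrement argument as in part 1 shows that if $\bm\gamma$ were unstable at every coordinate through $\lceil(k+1)\eps^{-1}\rceil+1$, then ${\bf P}^{(\bm{\gamma^1},\ldots,\bm{\gamma^{j}})}$ would be empty, contradicting $p$ being in it. Hence the process terminates with a stable coordinate, producing a $\bm\gamma\in\Upsilon$ with $p\in {\bf P}^{\bm\gamma}$; combined with the trivial inclusion $\bigcup_{\bm\gamma\in\Upsilon}{\bf P}^{\bm\gamma}\subseteq {\bf P}$, this proves part 3.

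The main subtle point is part 3: one has to verify that the greedy choice at each step can indeed be made so that $p$ persists in the refined product set, which is exactly what \eqref{equnion} guarantees (the union is over $\bm\alpha\in\Lambda$ and covers the whole previous set), and that the first stable coordinate is reached before the chain of unstable steps would force the set to shrink below $1$. All other ingredients are routine bookkeeping with the size decrement $n^{-\eps}$.
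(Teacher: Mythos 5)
Your proof is correct and follows essentially the same route as the paper: parts 1 and 2 are identical (iterate the $n^{-\eps}$ shrinkage at unstable coordinates, then count sequences over $\Lambda$), and part 3 rests on the same two ingredients, namely \eqref{equnion} and the length bound from part 1. The only cosmetic difference is that you chase a single tuple of ${\bf P}$ through a greedy refinement until the first stable coordinate, whereas the paper runs an induction on the sequence length via the auxiliary families $\Upsilon_j$; the two arguments are logically equivalent.
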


\begin{proof}
\begin{enumerate}
  \item If $\bm{\gamma}$ is unstable at $j$ then \[|{\bf P}^{(\bm\gamma^1,\ldots,\bm\gamma^{j})}|\le |{\bf P}^{(\bm\gamma^1,\ldots,\bm\gamma^{j-1})}|\cdot n^{-\eps}.\]
  Since $|{\bf P}| = n^{k+1}$ and $|{\bf P}^{\bm\gamma}|\ge 1,$ we conclude that $\bm{\bm\gamma}$ is unstable at at most $(k+1) \eps^{-1}$ indices $j$. \vspace{0.1cm}
  
  \item It follows from part 1 by counting all possible sequences of length at most $(k+1) \eps^{-1}+1$ of elements from the set $\Lambda$. (Note that $|\Lambda| = O_{\eps}(1).$) \vspace{0.1cm}

  \item For a nonnegative integer $j$ let $\Lambda^{\le j}$ be the set of all sequences of length at most $j$ of elements from $\Lambda$. Let 
  \[\Upsilon_j:= \left(\Upsilon\cap \Lambda^{\le j}\right)\cup\Psi_j,  \textrm{ where } \ \Psi_j:=\big\{\bm{\gamma}\in \Lambda^j: \bm{\gamma} \text{ is not stable for any }\ell \le j\big\}.\]
By part 1 of the lemma, $\Upsilon_j = \Upsilon$ for $j>(k+1)\eps^{-1}.$
  We prove by induction on $j$ that ${\bf P}= \bigcup_{\bm{\gamma}\in \Upsilon_j}{\bf P}^{\bm\gamma}$.

  $\Upsilon_0$ consists of an empty sequence, thus the statement is clear for $j=0$. Next, assume that the statement holds for $j$. 
  We have 
  \[{\bf P}=\bigcup_{\bm{\gamma}\in \Upsilon_j}{\bf P}^{\bm\gamma}=\bigcup_{\bm{\gamma}\in \Lambda^{\leq j}}{\bf P}^{\bm\gamma}\cup\bigcup_{\bm{\gamma}\in \Psi_j}{\bf P}^{\bm\gamma}.\]
  
  By $\eqref{equnion}$ we have that ${\bf P}^{\bm\gamma} = \bigcup_{\bm{\gamma'}}{\bf P}^{\bm\gamma'}$ holds for any $\bm\gamma\in \Psi_j$, where the union is taken over the sequences from $\Lambda^{j+1}$ that coincide with $\bm{\gamma}$ on the first $j$ entries. This, together with  $\bm\gamma'\in \left(\Upsilon\cap \Lambda^{j+1}\right)\cup \Psi_{j+1}$ when ${\bf P}^{\bm\gamma'}$ is nonempty finishes the proof.
 \end{enumerate}
\end{proof}

Parts 2 and 3 of Lemma~\ref{decompprop} imply that in order to complete the proof of the \mbox{$k\equiv 1$ (mod $3$)} case, it is sufficient to show that for any $\bm{\gamma}\in \Upsilon$ we have
\begin{equation}\label{eq61}
C_k(P_1^{\bm\gamma},\ldots, P_{k+1}^{\bm\gamma})= O\left(u_2(n)\cdot n^{\frac{k-1}3+4k\eps}\right).
\end{equation}

From now on fix $\bm{\gamma}\in \Upsilon$. For each $i=1,\ldots, k+1$ let $R_i:=P_i^{\bm\gamma}$ and $Q_i:= P_i^{\bm\gamma'}$, where $\bm{\gamma'}$ is obtained from $\bm{\gamma}$ by removing the last element of the sequence.
Without loss of generality, assume that the length $\ell$ of $\bm{\gamma}$ is even. For each $i=1,\ldots, k+1,$ choose $x_i,y_i$ such that 
\[|Q_i| = n^{x_i}, \ \ \ \ |R_i| = n^{y_i}.\]

Let $\alpha_i:= \bm\gamma^{\ell-1}_i$ and $\beta_i:= \bm\gamma^{\ell}_i$. By the definition of $\bf{P}^{\bm{\gamma}}$ we have that each point in $Q_i$ is at least $n^{\alpha_i}$-rich but at most $n^{\alpha_i+\varepsilon}$-rich with respect to  $Q_{i-1}$ and $\delta_{i-1}$, and each point in $R_i$ is at least $n^{\beta_i}$-rich but at most $n^{\beta_i+\varepsilon}$-rich with respect to  $R_{i+1}$ and $\delta_i$.

By Observation~\ref{richnessbound}, we have \begin{equation}\label{eqkey} 
n^{\alpha_i}\leq \frac{u_2(n^{x_{i-1}},n^{x_i})}{n^{x_i}}\ \  \ \ \text{and} \ \ \ \ n^{\beta_i}\leq \frac{u_2(n^{y_{i}},n^{y_{i+1}})}{n^{y_i}}\le  \frac{u_2(n^{x_{i}},n^{x_{i+1}})}{n^{x_i-\eps}}.
\end{equation}
The last inequality follows from two facts: first $u_2(n^{y_{i}},n^{y_{i+1}})\le u_2(n^{x_{i}},n^{x_{i+1}})$ and, second, since $\bm\gamma$ is stable at its last coordinate\footnote{This is the only place where we use the stability of $\bm\gamma$ directly.}, we have $n^{y_i} = |R_i|\ge |Q_i|\cdot n^{-\eps} = n^{x_i-\eps}.$

In the same fashion as in the beginning of Section~\ref{sec33}, we can show that
\begin{align*}
C_k(R_1,\dots,R_{k+1})\leq& n^{y_{1}}n^{\beta_{1}+\dots+\beta_k+k\varepsilon}, \textrm{ and }\\[5pt]
C_k(R_1,\dots,R_{k+1})\leq C_k(Q_1,\dots,Q_{k+1})\leq& n^{x_{k+1}}n^{\alpha_{k+1}+\alpha_{k}+\dots+\alpha_2+k\varepsilon}.
\end{align*}
Combining the first of these displayed inequalities with \eqref{eqkey}, we have 
\begin{equation*}
C_{k}(R_1,\dots,R_{k+1})
\leq  u_2(n^{x_1},n^{x_{2}})\prod_{2\leq i \leq k}\frac{u_2\left (n^{x_{i}},n^{x_{i+1}}\right)}{n^{x_i}}n^{2k\varepsilon} .
\end{equation*}
Recall that 
\begin{equation}\label{eq66}
u_2(n^{x_{i}},n^{x_{i+1}})=O\left (\max \{n^{\frac{2}{3}(x_i+x_{i+1})},n^{x_i},n^{x_{i+1}}\}\right ).
\end{equation}
To bound $C_k(R_1,\dots,R_{k+1})$,
we consider several cases based on which of these three terms can be used to bound $u_2(n^{x_{i}},n^{x_{i+1}})$ for different values of $i$.

\bigskip

{\bf Case 1: }
 Either $u_2(n^{x_1},n^{x_2})= O(n)$ or $u_2(n^{x_k},n^{x_{k+1}})= O(n)$ holds.
As in the proof of Lemma~\ref{smalln}, we have 
\begin{multline*}
    C_k(R_1,\dots,R_{k+1})\\[5pt] \leq  
    \min\big\{ 2u_2(n^{y_1},n^{y_{2}})C_{k-3}(R_4,\dots,R_{k+1}),2u_2(n^{y_k},n^{y_{k+1}})C_{k-3}(R_1,\dots,R_{k-2})\big \}.
\end{multline*}
By induction we obtain $C_{k-3}(R_4,\dots,R_{k+1}),C_{k-3}(R_1,\dots,R_{k-2})= O\left(n^{\frac{k-4}{3}+\varepsilon}\cdot u_2(n)\right )$. Together with the assumption of Case 1, and the fact that $u_2(n^{y_1},n^{y_{2}})\leq u_2(n^{x_1},n^{x_{2}})$ and \mbox{$u_2(n^{y_k},n^{y_{k+1}})\leq u_2(n^{x_k},n^{x_{k+1}})$}, this implies \eqref{eq61} and finishes the proof.\\

{\bf Case 2: } 
For some $i=1,\ldots, (k-1)/3,$ one of the following holds: 
\vspace{0.3cm}
\begin{itemize}
    \item $u_2(n^{x_{3i+1}},n^{x_{3i+2}})= O(\max\{n^{x_{3i+1}},n^{x_{3i+2}}\})$; 
    \vspace{0.1cm}
    \item $u_2(n^{x_{3i-1}},n^{x_{3i}})= O(n^{x_{3i-1}})$;
    \vspace{0.1cm}
    \item $u_2(n^{x_{3i}},n^{x_{3i+1}})= O(n^{x_{3i+1}})$.
\end{itemize}

We will show how to conclude in the first case. The other cases are very similar and we omit the details of their proofs. 
If $u_2(n^{x_{3i+1}},n^{x_{3i+2}})= O(n^{x_{3i+2}})$ then $n^{\alpha_{3i+2}}=O(1)$ by \eqref{eqkey}. Every chain  $(r_1,\dots,r_{k+1})\in \mathcal C_k(Q_1,\ldots, Q_{k+1})$ 
can be obtained as follows. 
\vspace{0.3cm}
\begin{enumerate}
    \item Pick a $(3i-2)$-chain $(r_1,\dots,r_{3i-1})$ with $r_j\in Q_j$ for every $j$.
    \vspace{0.2cm}
    \item Pick a $(k-3i-1)$-chain $(r_{3i+2},r_{3i+3},\dots,r_{k+1})$ with $r_j\in Q_j$ for every $j$.
    \vspace{0.2cm}
    \item Extend $(r_{3i+2},r_{3i+3},\dots,r_{k+1})$ to a $(k-3i-2)$ chain $(r_{3i+1},r_{3i+2},\dots,r_{k+1})$.
    \vspace{0.2cm}
    \item Connect $(r_1,\dots,r_{3i-1})$ and $(r_{3i+1},r_{3i+2},\dots,r_{k+1})$ to obtain a $k$-chain.
\end{enumerate}

In the first step, we have $O\left(n^{\frac{3i-3}3+\varepsilon}\cdot u_2(n)\right)$ choices by induction on $k$. In the second step, we have $\tilde O\left(n^{\frac{k-3i+2}{3}}\right)$ choices by the $k\equiv 0$ (mod $3$) case of Theorem~\ref{main}. In the third step, we have at most $n^{\alpha_{3i+2}+\varepsilon}= O(n^{\varepsilon})$ choices. Finally, in the fourth step we have at most $2$ choices. Thus the number of $k$-chains is at most \[O\left(n^{\frac{3i-3}{3}+\varepsilon}\cdot u_2(n)\right)\cdot \tilde O\left (n^{\frac{k-3i+2}{3}}\right )\cdot O\left(n^{\varepsilon}\right)\cdot 2=O\left(n^{\frac{k-1}{3}+3\varepsilon}\cdot u_2(n)\right),\]
finishing the proof of the first case.

\smallskip
If $u_2(n^{x_{3i+1}},n^{x_{3i+2}})= O(n^{x_{3i+1}})$ then  $n^{\beta_{3i+1}}= O( n^{\varepsilon})$ by \eqref{eqkey}.\footnote{This is the key application of \eqref{eqkey}, and the reason why we needed a decomposition with regularity in both directions between the consecutive parts.}
We proceed similarly in this case, but we count the $k$-chains now in $R_1\times\ldots\times R_{k+1}$ instead in $Q_1\times \ldots \times Q_{k+1}$ (and get an extra factor of $n^{\eps}$ in the bound). In all cases, we obtain \eqref{eq61}.

\bigskip

{\bf Case 3: } Neither the assumptions of Case 1 nor that of Case 2 hold.  We define four sets $S'$, $S'_+$, $S'_{++}$, and $S'_-$ of indices in $\{2,\ldots, k\}$ as follows. Let

\begin{flalign*}
S'& := \setbuilder{i}{u_2(n^{x_i},n^{x_{i-1}})= O(n^{\frac{2}{3}(x_i+x_{i-1})}) \textrm{ and } u_2(n^{x_{i+1}},n^{x_{i}})= O(n^{\frac{2}{3}(x_{i+1}+x_{i})}) }, \\[7pt]
S'_+& := \Big\{i : u_2(n^{x_i},n^{x_{i-1}})= O(n^{\frac{2}{3}(x_i+x_{i-1})}) \textrm{ and } u_2(n^{x_{i+1}},n^{x_{i}})= O(n^{x_{i}}), \textrm{ or } \\
 & \phantomrel{=}{}  \phantomrel{=}{} \phantomrel{=}{} \phantomrel{=}{} u_2(n^{x_i},n^{x_{i-1}})= O(n^{x_i}) \textrm{ and } u_2(n^{x_{i+1}},n^{x_{i}})= O(n^{\frac{2}{3}(x_{i+1}+x_{i})}) 
\Big \}, \\[7pt]
 S'_{++}& :=\Big\{i : u_2(n^{x_i},n^{x_{i-1}})= O(n^{x_i}) \textrm{ and } u_2(n^{x_{i+1}},n^{x_{i}})= O(n^{x_{i}})
\Big \}, \textrm{ and} \\[7pt]
S'_-& :=\Big\{i : u_2(n^{x_i},n^{x_{i-1}})= O(n^{\frac{2}{3}(x_i+x_{i-1})}) \textrm{ and } u_2(n^{x_{i+1}},n^{x_{i}})= O(n^{x_{i+1}}), \textrm{ or } \\
& \phantomrel{=}{}  \phantomrel{=}{} \phantomrel{=}{} \phantomrel{=}{} u_2(n^{x_i},n^{x_{i-1}})= O(n^{x_{i-1}}) \textrm{ and } u_2(n^{x_{i+1}},n^{x_{i}})= O(n^{\frac{2}{3}(x_{i+1}+x_{i})}) 
\Big \}.
\end{flalign*}

Since the conditions of Case 2 are not satisfied, we have \[\{2,\dots,k\}\subseteq S'\cup S'_+\cup S'_{++}\cup S'_-.\]
Indeed, for each $i\in\{2,\ldots, k\},$ there are $9$ possible pairs of maxima in \eqref{eq66} with $i,i+1.$ The four sets above encompass $6$ possibilities. In total, there are $4$ possible pairs of maxima with only the two last terms from \eqref{eq66} used. For $i \equiv 1,2$ (mod $3$), any of those $4$ are excluded due to the first condition in Case 2 (in fact, then $i\in S'\cup S'_-$). If $i \equiv 0$ (mod $3$), then the second and the third condition in Case 2 rule out all possibilities but the one defining $S'_{++}$.

From these it directly follows that if $i\in S'_{++}$, then $i-1,i+1\in S'_-$, while if $i\in S'_+$ then one of $i-1,i+1$ is in $S'_-$. (Recall that $i\in S'_+\cup S'_{++}$ only if $i\equiv 0$ (mod $3$).) These together imply 
\begin{equation}\label{eq67}
    |S'_+|+2|S'_{++}|\le |S'_-|.
\end{equation}

We partition $\{2,\dots,k\}$ using these sets as follows: let $S_- = S'_-, S = S'\setminus S'_-, S_+ = S'_+\setminus (S'_-\cup S')$ and $S_{++}=\{2,\dots,k\}\setminus S'_-\cup S'\cup S'_{+}$. Note that the analogue of \eqref{eq67} holds for the new sets. That is, we have
\begin{equation*}\label{eq68}
    |S_+|+2|S_{++}|\le |S_-|.
\end{equation*}

Recall that 
\begin{equation}\label{24}
C_{k}(R_1,\dots,R_{k+1})
\leq  u_2(n^{x_1},n^{x_{2}})\prod_{2\leq i \leq k}\frac{u_2\left (n^{x_{i}},n^{x_{i+1}}\right)}{n^{x_i}}n^{2k\varepsilon}. 
\end{equation}
Since the assumptions of Case 1 and 2 do not hold, we have $2,k\in S$. Indeed, $2,k\ne 0$ \mbox{(mod $3$)} and thus $2,k\notin S_+,S_{++}$. Further, if say $k\in S_-=S'_-$ then by the definition of $S'_-$ we either have $u_2(n^{x_{k+1}},n^{x_k}) = O(n)$, or $u_2(n^{x_k},n^{x_{k-1}})=O(n^{x_{k-1}})$. The first case cannot hold since the assumption of Case 1 does not hold. Further, the second case cannot hold either, since it would imply $x_k\le \frac {x_{k-1}}2\le \frac 12$, meaning $u_2(n^{x_{k+1}},n^{x_k}) = O(n)$.
Using $2,k\in S$ and expanding \eqref{24}, we obtain
{\small \begin{equation}\label{first}
C_{k}(R_1,\dots,R_{k+1})\leq  
 n^{2k\varepsilon}u_2(n^{x_1},n^{x_2})n^{-\frac{1}{3}x_2}n^{\frac{2}{3}x_{k+1}}\prod_{\substack{i\in S,\\i\neq 2}}n^{\frac{1}{3}x_i}\prod_{i\in S_+}n^{\frac{2}{3}x_i}\prod_{i\in S_{++}}n^{x_i}\prod_{i\in S_-}n^{-\frac{1}{3}x_i},
\end{equation}}
and
{\small \begin{equation}\label{second}
 C_{k}(R_1,\dots,R_{k+1})\leq 
 n^{2k\varepsilon}u_2(n^{x_k},n^{x_{k+1}})n^{-\frac{1}{3}x_k}n^{\frac{2}{3}x_1}\prod_{\substack{i\in S,\\ i\neq k}}n^{\frac{1}{3}x_i}\prod_{i\in S_+}n^{\frac{2}{3}x_i}\prod_{i\in S_{++}}n^{x_i}\prod_{i\in S_-}n^{-\frac{1}{3}x_i}.
\end{equation}}

Taking the product of \eqref{first} and \eqref{second} we obtain 

\begin{multline*}
 C_{k}(R_1,\dots,R_{k+1})^2 \leq \\  n^{4k\varepsilon}\cdot u_2(n^{x_1},n^{x_2})u_2(n^{x_k},n^{x_{k+1}})n^{\frac{2}{3}(x_1+x_{k+1})}\left (\prod_{ \substack{i\in S,\\ i\neq 2,k}} n^{\frac{1}{3}x_i}\prod_{i\in S_+}n^{\frac{2}{3}x_i}\prod_{i\in S_{++}}n^{x_i}\prod_{i\in S_-}n^{-\frac{1}{3}x_i} \right)^2\\[10pt] \leq 
 n^{4k\varepsilon}\cdot u_2(n,n)^2\cdot n^{2\left(\frac{2}{3}+ \frac{1}{3}|S\setminus\{2,k\}|+\frac{2}{3}|S_+|+|S_{++}|\right ) }= u_2(n,n)^2\cdot n^{\frac{2(k-1)}{3}+4k\eps}.
\end{multline*}
The last equality follows from $|S_+|+2|S_{++}|\le |S_-|$, which is equivalent to $\frac 23|S_+|+|S_{++}|\le \frac 13(|S_+|+|S_{++}|+|S_-|)$, and from the fact that $S$, $S_+$,$S_{++}$, and $S_-$ partition $\{2,\dots,k\}$. This finishes the proof.

\section{Bounds in \texorpdfstring{\bm{$\mathbb{R}^3$}}{Lg}}

Similarly as in the planar case, for a fixed $\bm\delta=(\delta_1,\dots,\delta_k)$ and $P_1\dots,P_{k+1}\subseteq \mathbb{R}^3$ we denote by $\mathcal{C}_k^3(P_1,\dots,P_k)$ the family of $(k+1)$-tuples $(p_1,\dots,p_{k+1})$ with $p_i\in P_i$ for all $i\in[k+1]$ and with $\|p_i-p_{i+1}\|=\delta_i$ for all $i\in[k]$. Let $C_k^3(P_1,\dots,P_{k+1})=|\mathcal{C}_k^{3}(P_1,\dots,P_{k+1})|$ and 
\[C^3_k(n_1,\dots,n_{k+1})=\max C_k^{3}(P_1,\dots,P_{k+1}),\]
where the maximum is taken over all choices of $P_1,\ldots, P_{k+1}$ subject to $|P_i|\le n_i$ for all $i\in [k+1]$.

Similarly to the planar case it follows that $C^3_k(n)\leq C^3_k(n,\dots,n)\leq C^3_k\left((k+1)n\right)$. Since we are only interested in the order of magnitude of $C^3_k(n)$ for fixed $k$, sometimes we are going to work with $C^3_k(n,\dots,n)$ instead of $C^3_k(n)$.

\subsection{Lower bounds}

For completeness, we recall the constructions from \cite{Shef} for even $k\geq 2$. Let $\bm{\delta}=(\delta_1,\dots,\delta_k)$ be any given sequence. For every even $2\leq i\leq k$, let $P_i=\{p_i\}$ be a single point such that the sphere of radius $\delta_i$ centred at $p_i$ and the sphere of radius $\delta_{i+1}$ centred at $p_{i+2}$ intersect in a circle. Further, let $P_1$ be a set of $n$ points contained in the sphere of radius $\delta_1$ centred at and $p_2$, and $P_{k+1}$ be a set of $n$ points contained in the sphere of radius $\delta_k$ centred at and $p_2$.
Finally, for every odd $3\leq i \leq k-1$, let $P_i$ be a set of $n$ points contained in the intersection of the sphere of radius $\delta_{i-1}$ centred at $p_{i-1}$ and of the sphere of radius $\delta_{i}$ centred at $p_{i+1}$. Then $P_1\times\dots\times P_{k+1}$ contains $n^{\frac{k}{2}+1}$ many $k$-chains, since every element of $P_1\times\dots\times P_{k+1}$ is a $k$-chain, and $|P_1\times\dots\times P_{k+1}|=n^{\frac{k}{2}+1}$.

Next, we prove the lower bounds for odd $k\geq 3$ and $\bm\delta=(1,\dots,1)$ given in Proposition \ref{oddk}.

\begin{proof}[Proof of Proposition \ref{oddk}]

First we show that $C_k^3(n)=\Omega\left (\frac{u_3(n)^k}{n^{k-1}}\right )$. Take a set $P'\subset \R^3$ of size $n$ that contains $u_3(n)$ point pairs at unit distance apart. It is a standard exercise in graph theory to show that since $u_3(n)$ is superlinear, there is $P\subset P'$ such that $\frac{n}{2}\leq |P|\leq n$ and for every $p\in P$ there are at least $\frac{u_3(n)}{4n}$ points $p'\in P$ at distance $1$ from $p$. Then $P$ contains $\Omega\left (\frac{u_3(n)^k}{n^{k-1}}\right )$ many $k$-chains with $\bm\delta=(1,\dots,1)$.

To prove $C_k^3(n)=\Omega\left (us_3(n)n^{(k-1)/2}\right)$, we modify and extend the construction used for $k-1$ as follows. Let $P_1,\dots,P_{k-1}$ be as in the construction for $(k-1)$-chains with $\bm\delta=(1,\dots,1)$ (from the even case). Further, let $P_{k}$ be a set of $n$ points on the unit sphere around $p_{k-1}$, and $P_{k+1}$ be a set of $n$ points such that $u_3(P_k,P_{k+1})=us_3(n)$. Since every $(p_1,\dots,p_{k+1})\in P_1\times \dots \times P_{k+1}$ with $\|p_k-p_{k+1}\|=1$ is a $k$-chain, we obtain that $P_1\times\dots\times P_{k+1}$ contains $\Omega\left (us_3(n)n^{(k-1)/2}\right)$ many $k$-chains with $\bm\delta=(1,\dots,1)$.
\end{proof}

\subsection{Upper bound}

We again fix  $\bm\delta=(\delta_1,\dots,\delta_k)$ throughout the section.
The following result with $x=1$ implies the upper bound in Theorem~\ref{3d}.

\begin{thm}\label{3dbound} For any fixed integer $k\geq 0$ and $x\in [0,1]$, we have 
\begin{equation*}
C^3_k(n^{x},n,\ldots,n)=\tilde O\left (
  n^{\frac{k+1+x}{2}}\right ).
\end{equation*}
\end{thm}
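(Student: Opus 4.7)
The plan is to induct on $k$ by peeling one vertex off the front of the chain at each step. Each peel should contribute an exponent of $1/2$, matching the fact that the target bound $n^{(k+1+x)/2}$ grows by $n^{1/2}$ as $k$ increases by one, and reflecting the $3/4$ exponent in the three-dimensional incidence bound \eqref{eq3d}. First I would verify the base cases $k=0$ and $k=1$ directly: for $k=0$ the bound reduces to $n^x \le n^{(1+x)/2}$, and for $k=1$ each of the three terms in $u_3(n^x,n)=O(n^{3(x+1)/4}+n^x+n)$ is at most $n^{(2+x)/2}$ whenever $x\in[0,1]$.

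For the inductive step with $k\ge 2$, I would take sets $P_1,\ldots,P_{k+1}$ with $|P_1|=n^x$ and $|P_i|=n$ for $i\ge 2$, and dyadically partition $P_2$ by richness with respect to $(P_1,\delta_1)$ into classes $P_2^\alpha$ of richness $\Theta(n^\alpha)$ indexed by $O(\log n)$ values of $\alpha$. The key inequality is
\[
C_k^3(P_1,\ldots,P_{k+1}) \le \sum_\alpha 2n^\alpha \cdot C_{k-1}^3(P_2^\alpha,P_3,\ldots,P_{k+1}),
\]
since every $p_2\in P_2^\alpha$ has at most $2n^\alpha$ partners $p_1\in P_1$, after which the remaining $(k-1)$-chain is counted by the inductive hypothesis applied with $P_2^\alpha$ playing the role of the small first set. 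The three-dimensional bound \eqref{richness} yields $|P_2^\alpha|=O(n^{3x-4\alpha}+n^{x-\alpha})$, so writing $|P_2^\alpha|=n^{x'}$ the induction gives
\[
n^\alpha\cdot C_{k-1}^3(P_2^\alpha,P_3,\ldots,P_{k+1}) = \tilde O\bigl(n^{\alpha+(k+x')/2}\bigr).
\]

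The main step is then a short case analysis in $\alpha$. For $\alpha\le 2x/3$ the term $n^{x-\alpha}$ dominates $|P_2^\alpha|$, producing the bound $\tilde O(n^{(k+x+\alpha)/2})$, while for $\alpha\ge 2x/3$ the term $n^{3x-4\alpha}$ dominates, producing $\tilde O(n^{(k+3x-2\alpha)/2})$. Both expressions are maximised at $\alpha=2x/3$, giving the uniform bound $\tilde O(n^{(k+5x/3)/2})$, which is at most $\tilde O(n^{(k+1+x)/2})$ precisely because $x\le 1$. The hard part is really only verifying that this single one-sided peel suffices for every $k$; this is what makes the three-dimensional argument cleaner than the planar one, because the exponent $3/4$ in \eqref{eq3d} is strong enough that no two-sided decomposition and no dichotomy on $k$ modulo anything is required.
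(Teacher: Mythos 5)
Your overall strategy---induction on $k$, dyadic decomposition of $P_2$ by richness with respect to $(P_1,\delta_1)$, peeling off one vertex at cost $O(n^{\alpha})$, and applying the inductive hypothesis with $P_2^{\alpha}$ playing the role of the small first set---is exactly the paper's argument. The gap is in your final case analysis in $\alpha$. You have the dominance in \eqref{richness} backwards: since $3x-4\alpha\ge x-\alpha$ precisely when $\alpha\le 2x/3$, it is the term $n^{3x-4\alpha}$ that dominates $|P_2^{\alpha}|$ for $\alpha\le 2x/3$, and $n^{x-\alpha}$ that dominates for $\alpha\ge 2x/3$; in each of your two ranges you bound $|P_2^{\alpha}|$ by the \emph{smaller} of the two terms, which is not a valid upper bound. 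With the correct dominance your two-case split no longer suffices: for $\alpha\le 2x/3$ the richness bound only gives $x'\le 3x-4\alpha$, and the resulting exponent $\alpha+(k+3x-4\alpha)/2=(k+3x-2\alpha)/2$ exceeds the target $(k+1+x)/2$ whenever $\alpha<x-\tfrac12$ (e.g.\ $x=1$, $\alpha=0$ gives $(k+3)/2$). The fix, which is what the paper does, is a third case: for $\alpha\le x/2$ discard the richness bound and use the trivial bound $|P_2^{\alpha}|\le n$, i.e.\ $x'\le 1$, giving exponent $\alpha+(k+1)/2\le(k+1+x)/2$; the bound $x'\le 3x-4\alpha$ is only used in the middle range $x/2\le\alpha\le 2x/3$, where $(k+3x-2\alpha)/2\le(k+2x)/2\le(k+1+x)/2$, and $x'\le x-\alpha$ is used for $\alpha\ge 2x/3$, where the exponent is $(k+x+\alpha)/2\le(k+x+1)/2$.

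A further sign that something is off: your claimed uniform bound $\tilde O\bigl(n^{(k+5x/3)/2}\bigr)$ is strictly stronger than the theorem, and at $x=1$ it would give $C^3_k(n)=\tilde O\bigl(n^{k/2+5/6}\bigr)$, contradicting the lower bound $C^3_k(n)=\Omega\bigl(n^{k/2+1}\bigr)$ for even $k$ in Theorem~\ref{3d}. In the corrected analysis the worst case sits near $\alpha=x/2$ (not $\alpha=2x/3$), where the exponent equals the target $(k+1+x)/2$, so the theorem's bound is the best this argument can produce.
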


\begin{proof}
The proof is by induction on $k$. For $k=0$ the bound is trivial, and for $k=1$ it follows from \eqref{eq3d}.

For $k\geq 2$ let $P_1,\dots,P_{k+1}\subseteq \mathbb{R}^3$ be sets of points satisfying  $|P_1|=n^x$, and $|P_i|=n$ for $2\leq n \leq k+1$. Denote by $P_2^{\alpha}\subseteq P_2$ the set of those points in $P_2$ that are at least $n^{\alpha}$-rich but at most $2n^{\alpha}$-rich with respect to $P_1$ and $\delta_1$. 

It is not hard to see that
\[\mathcal{C}^3_k(P_1,P_2\dots,P_{k+1})\subseteq \bigcup_{\alpha\in \Lambda} \mathcal{C}^3_k(P_1,P_2^{\alpha},P_3,\ldots,P_{k+1}),\]
where $\Lambda:=  \{\frac{i}{\log n}: i= 0,1,\ldots, \lfloor \log n\rfloor\}$. Since $|\Lambda| = \tilde O(1)$, it is sufficient to prove that, for every $\alpha\in \Lambda,$ we have 
\begin{equation*}\label{alpha3}
C^3_k(P_1,P_2^{\alpha},P_3,\ldots, P_{k+1})= \tilde O\left ( n^{\frac{k+1+x}2}\right ).
\end{equation*}

Assume that $|P^{\alpha}_2| = n^y.$ The number of $(k-1)$-chains in $P_2^{\alpha}\times P_3\times \dots\times P_{k+1}$ is at most $C^3_{k-1}(n^{y},n,\dots,n)$, and each of them may be extended in $2n^{\alpha}$ ways.  By induction, we get
\[C^3_k(P_1,P_2^{\alpha},P_3,\ldots, P_{k+1})= \tilde O\left(n^{\alpha}\cdot n^{\frac{k+y}{2}}\right),\]
and we are done as long as \begin{equation}\label{eqtocheck}2\alpha+k+y\le k+1+x.\end{equation}
To show this, we need to consider several cases depending on the value of $\alpha.$
Note that $\alpha\le x$.
\vspace{0.1cm}
\begin{itemize}
\item If  $\alpha\geq \frac{2x}{3}$, then by \eqref{richness} we have $y\leq x-\alpha$, and the LHS of \eqref{eqtocheck} is at most $\alpha+k+x\le 1+k+x$.
\vspace{0.1cm}
\item If $\frac x2 \le \alpha\le \frac{2x}{3}$ then by \eqref{richness} we have $y\leq 3x-4\alpha.$ The LHS of \eqref{eqtocheck} is at most $k+3x-2\alpha\le k+2x\le k+1+x$.
\vspace{0.1cm}
\item If $\alpha\le \frac x2$ then we use a trivial bound  $y\leq 1$. The LHS of \eqref{eqtocheck} is at most $2\alpha+k+1\le x+k+1.$
\end{itemize}
\end{proof}

\section{Concluding remarks}

If $\bf{\delta}=(1,\dots,1)$, then the results of this paper are about the finding the maximum number of $k$-long paths that can be determined by a unit-distance graph on $n$ vertices. As a generalisation, one can study the maximum number of isomorphic copies of a given tree in unit distance graphs. More generally, we propose the following problem. Let $T=(V,E)$ be a tree with $V=\{v_1,\dots,v_{k+1}\}$ and  $E=\{(v_{i_1},v_{j_1}),\dots,(v_{i_k},v_{j_k})\}$. For a fixed sequence $\bm{\delta}=\{\delta_1,\dots,\delta_k\},$ a $(k+1)$-tuple of distinct points $(p_1,\dots,p_{k+1})$ in $\mathbb{R}^d$ is a \emph{$T$-tree}, if $\|p_{i_\ell}-p_{j_\ell}\|=\delta_{\ell}$ for every $\ell=1,\ldots, k$. What is the maximum possible number $C_T^d(n)$ of $T$-trees in a set of $n$ points in $\R^d$? For $d=2$ we write $C_T^2(n)=C_T(n)$.

Note that, as in the case of chains, $C_T^d(n) = \Omega\big(n^{|V(T)|}\big)=\Omega(n^{k+1})$ for $d\ge 4$.
However, for $d=2$, $C_T(n)$ depends on $T$, not only on the number of vertices of $T$. Indeed, we saw that if $T$ is a path, then $C_T(n)$ is roughly $n^{k/3}$. At the same time, if $T$ is a star with $k$ leaves then $C_T(n)=\Theta(n^k)$. To see this, fix the centre of the star and distribute the remaining points equally on concentric circles of radii $\delta_1,\ldots, \delta_k$  around the centre of the star. One can similarly find examples in for $d=3$ where $C_T^3(n)$ depends on the tree itself.

For some trees determining $C_T(n)$ trivially reduces to determining $C_k(n)$ for some $k$. However, for many other trees the problem seems challenging, and new ideas are needed to tackle it. Subdivisions of stars show that it in some cases it might not be possible to determine $C_T(n)$ without knowing $u_2(n)$, even in terms of $u_2(n)$. To see this, let $T_{\ell,3}$ be a star-shaped tree on $3\ell+1$ vertices, with one (central) vertex of degree $\ell$, and $\ell$ paths on $3$ vertices joined to the central vertex. (This tree for $\ell =3$ is depicted on Figure \ref{fig2}, right.) 
Generalising the lower bound constructions we used for the chains in two different ways, we can obtain $C_{T_{\ell,3}}(n)=\Omega(u_2(n)^{\ell})$ (by fixing the central vertex of the tree) and $C_{T_{\ell,3}}(n)=\Omega(n^{\ell+1})$ (by fixing all vertices that are neighbours of the leaves). This is illustrated on Figure \ref{fig1} for $\ell=3$.  It would be interesting to prove that $C_{T_{\ell,3}(n)}$ is the maximum of these two lower bounds. 

\begin{figure}[h]
\centering
{\includegraphics[width=10cm]{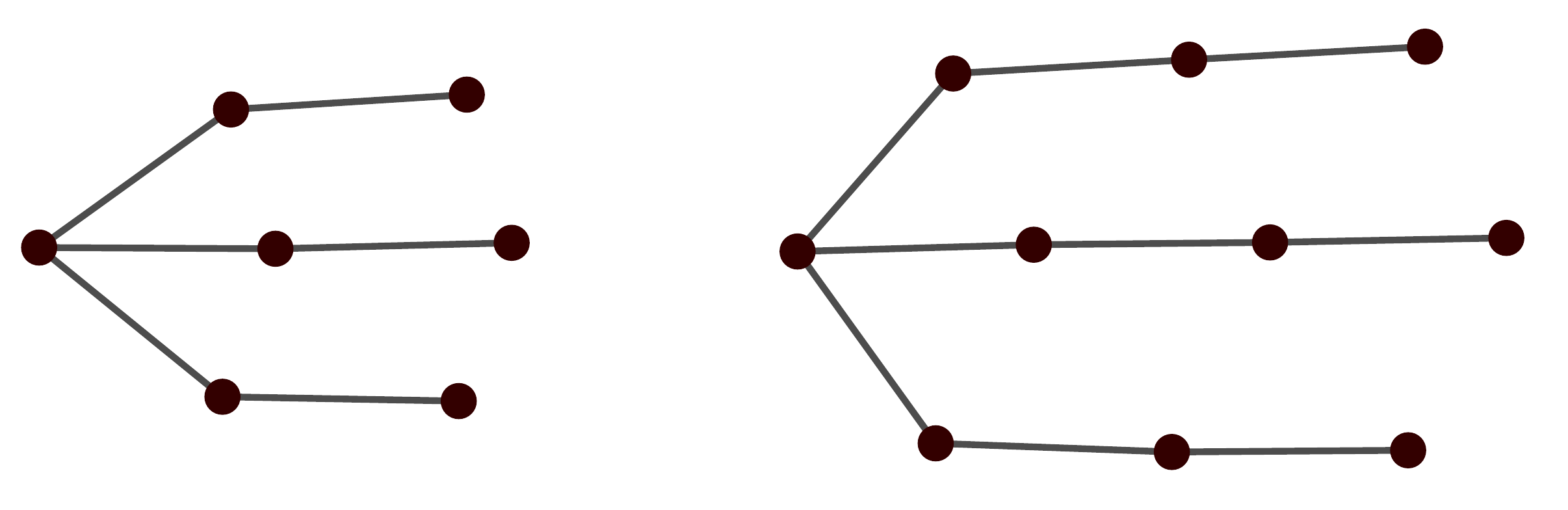}}
\caption{Star-shaped trees.}\label{fig2}
\end{figure}

\begin{figure}[h]
\centering
{\includegraphics[width=16.5cm]{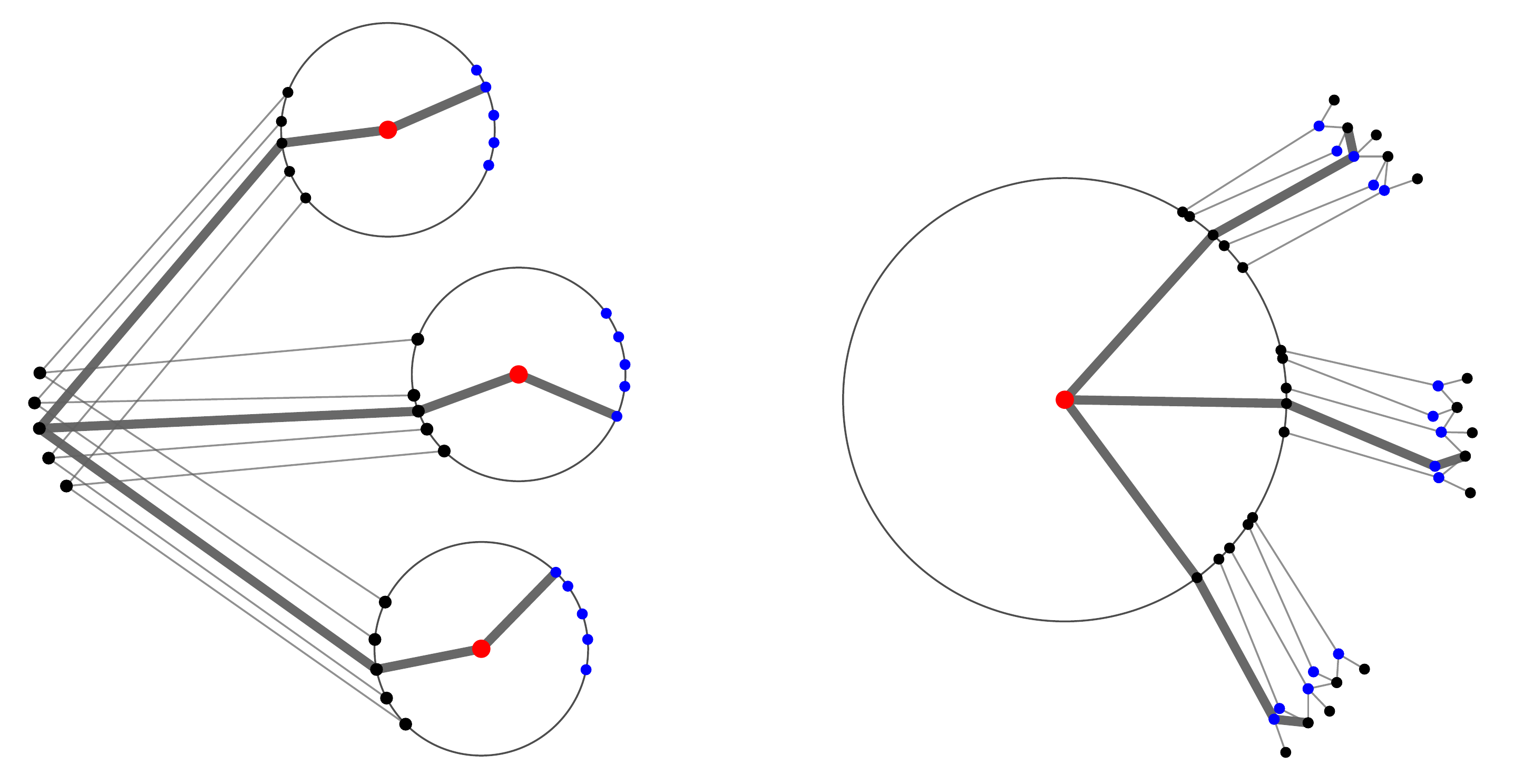}}
\caption{Examples providing lower bounds for the number of copies of $T_{\ell,3}$ with $\ell=3$. Left: $\Omega(n^{\ell+1})$ copies of  $T_{\ell,3}$. Right: $\Omega(u_2^{\ell})$ copies of  $T_{\ell,3}$. Vertices of red colour indicate parts (corresponding to the vertices of the tree and) that consist of a single vertex.}\label{fig1}
\end{figure}

\begin{pro}\label{pro1}Is it true that $C_{T_{\ell,3}}(n)=\Theta(\max\{n^{\ell+1},u_2(n)^{\ell}\})$?
\end{pro}

Motivated by the constructions described before, we  propose the following more general question.

\begin{pro}\label{treeconj}Is it true that for every tree $T$ there are integers $m,\ell$ such that $C_T(n)=\Theta(n^mu_2(n)^\ell)$?
\end{pro}

The smallest tree $T$, for which we cannot determine the order of magnitude of $C_T(n)$, is a star-shaped tree on $7$ vertices with one central vertex of degree $3$ and three and three paths on $2$ vertices joined to the central vertex (see the left tree of Figure \ref{fig2}).

\begin{pro}\label{small}
Is it true that for the star-shaped tree $T$ on $7$ vertices described above we have $C_T(n)=\Theta(n^3)$?
\end{pro}

Note that it may be easier (and also very interesting) to obtain upper bounds in Problems \ref{pro1}-\ref{small}  with a poly-logarithmic or $n^{\varepsilon}$ error term, as we did in Theorem \ref{main}.

In a follow-up paper, we find almost sharp bounds for $C_T(n)$ for some `non-trivial' trees (i.e., those that cannot be reduced to the chains case). Further, we will describe a general lower bound construction that motivates Problem \ref{treeconj}. We will also provide some partial results for Problem \ref{small} and connect it to an interesting incidence problem. 

Finally, it would be interesting to decide if the lower bounds in Proposition \ref{oddk} are sharp for $\bm\delta=(1,\dots,1)$. The first open case is $k=3$.

\begin{pro}
Is it true that $C_3^3(n)=\Theta\left (\max\left\{ \frac{u_3(n)^3}{n^2},us_3(n)n\right \}\right )$ for $\bm\delta=(1,\dots,1)$?
\end{pro}

\subsection*{Acknowledgements.} We thank Konrad Swanepoel and Peter Allen for helpful comments on the manuscript. We also thank Dömötör Pálvölgyi for suggesting Proposition \ref{Propprob}. The authors acknowledge the financial support from the Russian Government in the framework of MegaGrant no 075-15-2019-1926.

\bibliographystyle{amsplain}
\bibliography{biblio}

\end{document}